\theoremstyle{plain}
\newtheorem{theorem}{Theorem}
\newtheorem{lemma}[theorem]{Lemma}
\newtheorem{prop}[theorem]{Proposition}
\theoremstyle{definition} 
\newtheorem{definition}{Definition}
\theoremstyle{definition}
\newtheorem*{remark}{Remark}
\theoremstyle{definition}
\newtheorem{example}{Example}
\theoremstyle{definition}
\def\zz{\mathcal{Z}}
\def\gon{Gon\v{c}arov }
\newcommand{\vanish}[1]{}
\title{\gon Polynomials in Partition Lattices  \\ and Exponential Families}
\author{ 
Dedicated to Joseph Kung  \vspace{1cm} \\  
Ayomikun Adeniran\thanks{Corresponding author: ayoijeng@math.tamu.edu} \ and \   Catherine Yan\thanks{cyan@math.tamu.edu}\\
Department of Mathematics, Texas A\&M University, College Station, TX 77843}
\date{} 
\begin{document}
\maketitle

\abstract{
Classical \gon polynomials arose in 
numerical analysis as a basis for the 
solutions of the \gon interpolation problem. These polynomials   provide a natural algebraic tool in the enumerative theory of   parking 
functions. 
By replacing  the differentiation operator with a delta operator and using the theory of finite operator calculus, 
Lorentz, Tringali and Yan introduced the sequence of generalized Gon\v{c}arov polynomials associated to a 
pair $(\Delta, \zz)$ of a delta operator $\Delta$ and an interpolation
grid $\zz$. Generalized \gon polynomials share many nice algebraic  properties and have a connection with  the theories of binomial enumeration and order statistics.  In this paper we give a complete combinatorial interpretation for any sequence of  generalized \gon polynomials. First we show that they can be realized  as weight enumerators in partition lattices. Then we give a more concrete realization in exponential families and show that these polynomials enumerate various enriched structures  of vector parking functions.}  \\

\noindent \textbf{Keywords}: \gon polynomials, partition lattices, exponential family

\noindent \textbf{AMS Classification}: 05A10, 05A15, 05A18, 06A07

\section{Introduction}
The classical \gon interpolation problem in numerical analysis was introduced by \gon \cite{Vgonc1,Vgonc2} and Whittaker\cite{whittaker}. It asks for a polynomial $f(x)$ of degree $n$ such that the $i$th derivative of $f(x)$ at a given point $a_i$ has value $b_i$ for $i=0,1,2,...,n$. The solution is obtained by taking linear combinations of the (classical) \gon polynomials,  or the Abel-\gon polynomials, which have been studied extensively by analysts; see e.g. \cite{Vgonc1, levi, frank, haslinger}. 
\gon polynomials also play a crucial role in Combinatorics due to their close relations to parking functions. A (classical) parking function is a sequence $(a_1,a_2,...,a_n)$ of positive integers such that for every $i=1,2,...,n$, there are at least $i$ terms that are less than or equal to $i$. For example, the sequences $(1,2,3,4)$ and $(2,1,4,1)$ are both  parking functions while $(2,2,3,4)$ is not. 
The set of parking functions stays in the center  of enumerative combinatorics, with many generalizations and connections to other research areas, such as hashing and linear probing in computer science, graph theory, interpolation theory, diagonal harmonics, representation theory,  and cellular automaton. 
See the comprehensive survey \cite{yan} for more on the combinatorial theory of parking functions.

 The connection between \gon polynomials and 
combinatorics was first found by Joseph Kung, who in 
 a short note \cite{kung81} of 1981 
 proved that  classical \gon polynomials give the probability distribution of the order statistics of $n$ independent uniform random variables, and its  difference analog describes the order statistics of  discrete, injective functions. 
 These results were further developed in \cite{kung} to an explicit correspondence between classical \gon polynomials and vector parking functions. 
 Inspired by the rich theory on delta operators and finite operator calculus, which is a unified theory on linear 
operators analogous to the differentiation operator $D$ and special polynomials, 
Lorentz, Tringali, and the second author of the present paper introduced the generalized \gon polynomials \cite{goncarovdelta}  as a basis for the solutions to the \gon interpolation problem with respect to a delta operator. 
 Many algebraic and analytic properties of  classical \gon polynomials have been extended to the generalized version.

A natural question is to find the combinatorial interpretations for the generalized \gon polynomials. 
To answer this question we need to understand the combinatorial  significance of delta operators. In the third paper of the
seminal series \emph{On the Foundations of Combinatorial Theory III},  Mullin and Rota \cite{mullinrota}  developed the basic theory of delta operators and their associated  sequence of polynomials. Such sequences of polynomials are of binomial type and occur in many combinatorial problems when objects can be pieced together out of small, connected objects. Mullin and Rota's work provides a realization of binomial sequences in combinatorial problems. However, this realization is only valid for binomial sequences whose coefficients are non-negative integers, and so excludes 
many basic counting polynomials,  for example, the falling factorial $x_{(n)}=x(x-1)\cdots (x-n+1)$. Mullin and Rota hint at a generalization of their theory to incorporate such cases. 
Using the language of partitions, partition types and partition categories, Ray \cite{ray} proved that every polynomial sequence of binomial type can be realized as a weighted enumerator in  partition lattices.

In this paper we give a complete combinatorial interpretation of the generalized  \gon polynomials, first in Ray's partition lattices and then in a more concrete model, the exponential families, as described by Wilf \cite{wilf}. 
Basically,  to any polynomial sequence of binomial type 
and any given interpolation grid $\zz$
there is an associated sequence of \gon polynomials. 
While the sequence of binomial type can be realized as weighted enumerators in  partition lattices or in an exponential family,  the associated \gon polynomials count those structures which also encode vector parking functions. In other words, the generalized \gon polynomials characterize structures in 
a binomial enumeration problem that are subject to certain order-statistic constraints. Our results cover the initial attempt in \cite{goncarovdelta} which provides a combinatorial interpretation for some families of generalized  \gon polynomials in a structure called \emph{reluctant functions}.

\vanish{
It was shown by Kung and Yan \cite{kung} that Goncarov polynomials are the natural basis of polynomials for working with parking functions, and their generalizations can be obtained using \gon polynomials. A delta operator is a linear operator which possesses many properties of the differential operator. Extending  the \gon Interpolation problem, Lorentz, Tringali and Yan \cite{goncarovdelta} introduced the sequence of generalized \gon polynomials $\mathcal{T} = \{t_n(x,\delta, \mathcal{Z})\}_{n\geq 0}$, which is a basis for the solutions to the generalized \gon interpolation problem with respect to a delta operator. 

Mullin and Rota \cite{mullinrota} developed the basic theory of delta operators and their associated sequence of polynomials of binomial type. Polynomial sequences of binomial type occur in many combinatorial problems when objects that can be pieced together out of small, disjoint objects are enumerated. Their work provides a realization of binomial sequences in combinatorial problems. However, this realization is only valid for binomial sequences whose coefficients are non-negative integers, and so excludes for example, the case of the falling factorial polynomials $[x]_n$. Mullin and Rota hint at a generalization of their theory to incorporate such cases. Ray implements such a generalization in his paper \cite{ray}. Using a weight function $\omega$ defined with respect to partition categories, any binomial sequence can be realized as the enumerators of the weights of the objects of a binomial enumeration problem. \vspace{0.15in}

At the end of their paper \cite{goncarovdelta}, Lorentz, Tringali and Yan remarked, among other things, that it would be interesting to investigate the role of generalized \gon polynomials in such weighted counting. In this paper, we extend the result of Lorentz et al \cite{goncarovdelta} to show that given any binomial enumeration problem determined by a weight function $\omega$, the \gon polynomial sequence $\{t_n(x;\omega; \mathcal{Z})\}_{n\geq 0}$ can be realized as a weighted function over the partition lattice. Then, we explore a concrete realization of these polynomials in the enumeration of weighted exponential families. Following Wilf's notation in \cite{wilf}, weighted exponential families are exponential families on whose cards a weight function is defined. Hence, these structures extend the well-known notion of exponential families. We show that the \gon polynomials $t_n(x;\omega^{\varphi}(\mathcal{F});\mathcal{Z})_{n\geq 0}$ associated with the weighted exponential family $\mathcal{F}^{\omega}$ carry a combinatorial interpretation that combines the ideas of binomial enumeration and order statistics. \vspace{0.15in} }

The rest of the paper is organized as follows. In Section 2, we recall the basic theory of delta operators and binomial enumeration, as well as the concepts of generalized \gon polynomials and vector parking functions. In Section 3, we describe the realization of generalized \gon polynomials in partition lattices and weight functions. Then, in Section 4, we study a more concrete realization of \gon polynomials as type-enumerator in exponential families. 
We end the paper in Section 5 with a few closing remarks.

\section{Background}
\subsection{Delta Operators and Binomial Enumeration}
We recall the basic theory of delta operators and their associated sequence of basic polynomials as developed by  Rota, Kahaner, and Odlyzko \cite{rota2}. 
Let $\mathbb{K}$ be a field of characteristic zero and $\mathbb{K}[x]$ the vector space of all polynomials in the variable $x$ over $\mathbb{K}$. For each $a \in \mathbb{K}$, let $E_a$ denote the shift operator $\mathbb{K}[x] \rightarrow \mathbb{K}[x] : f(x) \mapsto f(x + a)$. A linear operator $\mathbf{s} : \mathbb{K}[x] \rightarrow \mathbb{K}[x]$ is called \emph{shift-invariant} if $\mathbf{s} E_a = E_a \mathbf{s}$ for all $a \in \mathbb{K}$, where the multiplication is the composition of operators.

\begin{definition}
 A delta operator $\Delta$ is a shift-invariant operator satisfying $\Delta(x)=a$ for some nonzero constant $a$.
\end{definition}


\begin{definition}
Let $\Delta$ be a delta operator. A polynomial sequence 
$\{p_n(x)\}_{n\geq 0}$ is called the sequence of basic polynomials, or the associated basic sequence of $\Delta$ if 
\begin{enumerate}[(i)]
\item $p_0(x)=1$;
\item Degree of $p_n(x)$ is $n$ and $p_n(0)=0$ for each $n\geq 1$;  
\item $\Delta(p_n(x)) = np_{n-1}(x)$.
\end{enumerate} 
\end{definition}

Every delta operator has a unique sequence of basic polynomials, which is a sequence of binomial type (or binomial sequence) that satisfies  
\begin{equation}‎ \label{eq:binomial} 
p_{n}(u+v)= \sum\limits_{i \geq 0} \binom{n}{i}p_{i}(u) p_{n-i}(v),
\end{equation}
for all $n\geq 0$.
Conversely, every polynomial sequence  of binomial type is the associated basic sequence of some delta operator.

Let $\mathbf{s}$ be a shift-invariant operator, and $\Delta$ a delta operator. Then $\mathbf{s}$ can be expanded uniquely as a formal power series of $\Delta$.  If 
\[
\mathbf{s}= \sum_{k \geq 0}   \frac{a_k}{k!} \Delta^k ,
\]
we say that $f(t)=\sum_{k \geq 0} \frac{a_k}{k!} t^k$ is the 
$\Delta$-indicator of $\mathbf{s}$. In fact, the correspondence 
\[
f(t)=\sum_{k \geq 0} \frac{a_k}{k!} t^k
\longleftrightarrow 
\sum_{k \geq 0} \frac{a_k}{k!} \Delta^k
\]
is an isomorphism from the ring $\mathbb{K}[[t]]$ of formal power series in $t$ onto the ring of shift-invariant operators.  Under this isomorphism, a shift-invariant operator is invertible if and only if its $\Delta$-indicator $f(t)$ satisfies $f(0) \neq 0$, and it is a delta operator if and only if $f(0)=0$ and $f'(0)\neq 0$, i.e., $f(t)$ has a compositional inverse $g(t)$ satisfying $f(g(t))=g(f(t))=t$.

Another important result is the generating function for the sequence of basic polynomials $\{p_n(x)\}_{n\geq0}$ associated to a delta operator $\Delta$. Let $f(t)$ be the $D$-indicator of $\Delta$, where $D=d/dx$ is the differentiation operator. 
Let $g(t)$ be the  compositional inverse of $f(t)$. Then, 
\begin{equation}\label{eq:2}
\sum\limits_{n\geq 0} p_{n}(x) \frac{t^n}{n!}= \exp\left(xg(t)\right).
\end{equation}
The operator $\Lambda=g(D)$ is called the \emph{conjugate delta operator} of $\Delta$, and 
$\{p_n(x)\}_{n \geq 0}$ is the \emph{conjugate sequence} of 
$\Lambda$.  It is easy to see that  if $p_n(x) = \sum\limits_{k \geq 1} p_{n,k} x^k$, then 
$g(t) = \sum\limits_{k \geq 1} p_{k,1} \frac{t^k}{k!}$.

Polynomial sequences of binomial type are closely related to 
the theory of binomial enumeration. Consider the following model. 
Assume $\mathcal{B}$ is a family of discrete structures.
For a finite set $E$, let $\Pi(E)$ be the poset of all partitions $\pi$ of $E$, ordered by refinement, and write 
$|\pi|$ for the number of blocks of $\pi$. Define a 
\emph{$k$-assembly of $\mathcal{B}$-structures on $E$}  as  a partition $\pi$ of the set $E$ into $|\pi|=k$ blocks such that each block of $\pi$ is endowed with a
$\mathcal{B}$-structure. Let $B_k(E)$ denote the set of all such $k$-assemblies. 
For example, when $\mathcal{B}$ is a set of
rooted trees, a $k$-assembly of $\mathcal{B}$-structures on $E$ is a 
forest of $k$ rooted trees with vertex set $E$. 
We can also take $\mathcal{B}$ to be other structures, such as permutations, complete graphs, posets, etc. 
Assume that the cardinality of $B_k(E)$ depends only on the 
cardinality of $E$, but not its content.  In other words, there is a bijection between $B_k(E)$ and $B_k([n])$ where $[n]=\{1,2,...,n\}$ and $|E|=n$.

\vanish{
Next, we give a quick survey of the theory of binomial enumeration, based on the foundations laid in \cite{joyal} and \cite{mullinrota}.
Recall from \cite{joyal} that a \emph{species} $B$ is a covariant endofunctor on the category of finite sets and bijections. 
Given a finite set $E$, an element $s \in B(E)$ is called a $B$-structure on $E$. 
Following \cite{navarota}, let $\Pi(E)$ be the poset of all partitions $\pi$ of $E$, ordered by refinement, and write $|\pi|$ for the number of blocks of $\pi$.
A $k$-assembly of $B$-structures on $E$ is then a partition $\pi$ of the set $E$ into $|\pi|=k$ blocks such that each block of $\pi$ is endowed with a
$B$-structure. Let $B_k(E)$ denote the set of all such $k$-assemblies. For example, when $B$ is a set of
rooted trees, a $k$-assembly of $B$-structures on $E$ is a $k$-forest of rooted trees with vertex set $E$. We note that there is  a bijection between $B_k(E)$ and $B_k([n])$ where $[n]=\{1,2,...,n\}$ and $|E|=n$. We can also take $B$ to be other structures, such as permutations, complete graphs, posets, etc. The significance of this theory is that the enumeration of such structures gives rise to polynomials of binomial type.
}

\begin{definition}
Let
\[
b_{n,k}=\left\{ 
\begin{array}{ll} 
|B_k([n])|, &  \text{if } k\leq n \\
0,  &  \text{if } k>n,
\end{array}\right. 
\]
where $b_{0,0}=1$ and $b_{n,0}=0$ for $n\geq 1$.  
\end{definition}

\begin{theorem}[\cite{mullinrota}] \label{mrot}
Assume $b_{1,1} \neq 0$. 
If $b_n(x) = \sum_{k=1}^n b_{n,k}x^k$ is the enumerator for assemblies of $\mathcal{B}$-structures on $[n]$, 
then $(b_n(x))_{n\geq 0}$ is a sequence of polynomials of binomial type.
\end{theorem}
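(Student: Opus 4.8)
The plan is to verify the binomial recurrence \eqref{eq:binomial} directly for the sequence $(b_n(x))_{n \geq 0}$ by a combinatorial double-counting argument, exploiting the fact that $b_{n,k}$ counts $k$-assemblies of $\mathcal{B}$-structures. First I would recall that a polynomial sequence $(p_n(x))_{n \geq 0}$ with $\deg p_n = n$ is of binomial type precisely when $p_0(x) = 1$ and the identity $p_n(u+v) = \sum_{i} \binom{n}{i} p_i(u) p_{n-i}(v)$ holds; since $b_0(x) = 1$ and $b_{n,0} = 0$ for $n \geq 1$ with $b_{1,1} \neq 0$ guaranteeing $\deg b_n = n$ (one checks $b_{n,n} \geq 1$ because the partition into singletons, each carrying the unique $\mathcal{B}$-structure on a one-element set, is an $n$-assembly — here one needs $b_{1,1} \neq 0$ so that $\mathcal{B}$-structures on singletons exist), it suffices to establish the binomial identity.

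To prove the identity, I would interpret both sides as counting the same family of objects, using the standard trick of "coloring" the underlying set. Fix $n$ and think of $u$ and $v$ as formal variables that we will eventually specialize to nonnegative integers (this suffices since both sides are polynomials of degree $n$ and agreeing at infinitely many points forces equality). For $u, v \in \mathbb{Z}_{\geq 0}$, consider pairs consisting of (a) a partition $\pi$ of $[n]$ together with a $\mathcal{B}$-structure on each block, i.e. an assembly of total type recorded by $|\pi|$, and (b) a coloring of the blocks of $\pi$ with colors from a set of size $u+v$, say $\{1, \dots, u\} \cup \{1', \dots, v'\}$. The number of such colored assemblies is $\sum_{k} b_{n,k} (u+v)^k = b_n(u+v)$. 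On the other hand, such a colored assembly is equivalent to the data of: a subset $S \subseteq [n]$ of size $i$ (those elements lying in blocks colored by an unprimed color), an assembly of $\mathcal{B}$-structures on $S$ with blocks colored from $\{1,\dots,u\}$, and an assembly of $\mathcal{B}$-structures on $[n] \setminus S$ with blocks colored from $\{1',\dots,v'\}$. Summing over $S$ — equivalently over $i = |S|$ with a factor $\binom{n}{i}$ — and using that the cardinality of $B_k(E)$ depends only on $|E|$, the count becomes $\sum_i \binom{n}{i} b_i(u) b_{n-i}(v)$. Equating the two counts gives \eqref{eq:binomial} for all nonnegative integers $u, v$, hence as a polynomial identity.

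The main obstacle, though it is more a matter of care than difficulty, is ensuring that the decomposition in the second count is a genuine bijection: one must check that an assembly on $[n]$ whose blocks are partitioned into two color classes really does restrict to a well-defined assembly on each color class, which uses that each block lies entirely within one class (blocks are monochromatic) and that the $\mathcal{B}$-structure on a block is intrinsic to that block. One also needs the bijection $B_k(E) \cong B_k([|E|])$ to transport the count on $S$ and on $[n]\setminus S$ to the standard $b_{i,k}$ and $b_{n-i,k}$; this is exactly the content-independence hypothesis stated before the theorem. Alternatively — and this is perhaps cleaner to write — I would prove the generating-function identity $\sum_n b_n(x) t^n/n! = \exp\!\big(x \sum_k b_{k,1} t^k/k!\big)$ by the exponential formula for assemblies (the number of $k$-assemblies is obtained from the "connected" counts $b_{k,1}$ via $\exp$), and then invoke \eqref{eq:2} together with the stated isomorphism between formal power series and shift-invariant operators to conclude that $(b_n(x))$ is the basic sequence of the delta operator with $D$-indicator the compositional inverse of $\sum_k b_{k,1} t^k/k!$, hence of binomial type. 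I would likely present the direct bijective argument as the main proof since it is self-contained, and mention the generating-function route as a remark.
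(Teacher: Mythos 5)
Your proposal is correct and follows essentially the same route the paper indicates: interpreting $b_n(x)$ as the count of assemblies whose blocks are labeled from an $x$-element set, and obtaining the binomial identity \eqref{eq:binomial} by splitting the label set of size $u+v$ into two parts (the paper states this as a one-line remark after the theorem, since the result is cited from Mullin--Rota). The only nitpick is that $b_{1,1}\neq 0$ guarantees at least one, not necessarily a unique, $\mathcal{B}$-structure on a singleton; the degree claim still holds since $b_{n,n}=b_{1,1}^{\,n}$.
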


Theorem \ref{mrot} provides a realization of binomial sequences in combinatorial problems. If we think of $x$ as a positive integer  such that $|X|=x$ for some set $X$, then we can interpret $b_n(x)$ as the number of assemblies of $\mathcal{B}$-structures on $[n]$, where each block carries a label from $X$. From this viewpoint, it is easy to see that 
 $(b_n(x))_{n\geq 0}$ is of binomial type. 

This realization is only valid for binomial sequences whose coefficients are non-negative integers, and so excludes 
many polynomial sequences naturally appearing in combinatorics, 
for example, the falling factorials $x_{(n)}$.
Mullin and Rota expanded their construction slightly by
 considering the \emph{monomorphic classes},  in which different blocks receive different labels from $X$, and hence the counting polynomial becomes $\tilde b_n(x)=\sum_{k=1}^n b_{n,k} x_{(k)}$. 
 Ray \cite{ray} extended Mullin-Rota's theory and developed the concept of partition categories, and he proved that any binomial sequence can be realized as a weight enumerator in  partition lattices. We will use Ray's model in Section 3. 

\subsection{Generalized Gon\v{c}arov Polynomials and  Vector Parking Functions}

\vanish{
The application of \gon polynomials in Probability Theory and Combinatorics was first pointed out by Kung \cite{kung81} in his short note of 1981, \emph{A probabilistic interpretation of the \gon and related polynomials}. Later Kung and Yan \cite{kung} showed that \gon polynomials is the algebraic counterpart of the vector parking functions. 
Inspired by the rich theory of delta operators, Lorentz, Tringali and Yan \cite{goncarovdelta} extended the \gon interpolation problem by replacing the differential operator $D$ with a delta operator.
The solution of this problem yields the generalized \gon  polynomial. When $\Delta=D$, the classical Gon\v{c}arov polynomials are recovered. Generalized \gon  polynomials enjoy many nice algebraic properties and carry a combinatorial interpretation that combines the idea of binomial enumeration and order statistics.  } 

Let $\mathcal{Z}=(z_i)_{i\geq 0}$ be a fixed sequence with values in $\mathbb{K}$, where $\mathbb{K}$ is the scalar field.
For our purpose, it suffices to  take $\mathbb{K}$ to be $\mathbb{Q}$, 
$\mathbb{R}$, or $\mathbb{C}$. 
We call  $\mathcal{Z}$ the interpolation grid and $z_i \in \mathcal{Z}$  the $i$-th interpolation node. Let $\mathcal{T}=(t_n(x;\Delta,\zz))_{n\geq 0}$ 
be the unique sequence of polynomials that satisfies
\begin{equation}\label{eq:gp-def}
    \varepsilon_{z_i}\Delta^i(t_n(x; \Delta, \zz)) = n!\delta_{i,n},
\end{equation}
where $\varepsilon_{z_i}$ is evaluation at $z_i$. 

\begin{definition}
The polynomial sequence $\mathcal{T}=(t_n(x; \Delta, \zz))_{n\geq 0}$ determined by \eqref{eq:gp-def} is called the \textit{sequence of generalized \gon  polynomials} associated with the pair $(\Delta, \mathcal{Z})$ and $t_n(x;\Delta, \zz)$ is the $n$-th generalized \gon polynomial relative to the same pair.
\end{definition}
This sequence $\mathcal{T}$ has a number of interesting algebraic properties. One of them is a recurrence formula described as follows: Let $t_n(x)=t_n(x; \Delta, \zz)$   and $\{p_n(x)\}_{n\geq 0}$ be the basic sequence associated to $\Delta$. Then
 \begin{equation}\label{eq:gp-recurrence}
    p_n(x)= \sum_{i=0}^n \binom{n}{i}\ p_{n-i}(z_i) \  t_i(x).
\end{equation}
We remark that by definition,  to compute the generalized \gon polynomials given the basic sequence, one would find the 
conjugate operator $\Lambda$ via \eqref{eq:2}, compute 
$\Delta$ by solving for the compositional inverse of the $D$-indicator of $\Lambda$,  and then find the $n$-th polynomial 
$t_n(x)$ of the sequence
by using \eqref{eq:gp-def}. 
The computation required in this process  can be quite involved. 
However, \eqref{eq:gp-recurrence} gives a recursive formula which can be used as an alternative definition for $t_n(x)$, 
which is much more convenient in combinatorial problems. 
 For  other algebraic properties of generalized \gon polynomials, see \cite{goncarovdelta}.

Classical \gon polynomials  have a combinatorial interpretation in enumeration. 
Let $\vec{u}=(u_i)_{i\geq 1}$ be a sequence of non-decreasing positive integers. A (vector) $\vec{u}$-parking function of length $n$ is a sequence $(x_1,...,x_n)$ of positive integers whose order statistics, i.e., the nondecreasing rearrangement $(x_{(1)}, x_{(2)},..., x_{(n)})$,  satisfy the inequalities
 $x_{(i)} \leq u_i$ for all $i=1, \dots, n$.  Vector parking functions can be described via a parking 
process of $n$ cars trying to park along a line of $x \geq n$ 
parking spots. Cars enter one by one in order, and before parking, each driver has a preferred parking spot. Each driver goes to her preferred spot directly  and parks in the first spot available from there, if there exists one. A $\vec{u}$-parking function is a sequence of drivers' preferences such that at least $i$ cars prefer to  park in the first $u_i$ spots, for all $i=1, \dots, n$. When $u_i=i$, we recover the classical parking functions, which were originally introduced by Konheim and Weiss \cite{weiss} and are 
the preference sequences such that $x=n$ and every driver can 
find some spot to park.  In general, 
the $n$-th  \gon polynomials associated to the pair $(D, -\zz)$ counts the 
number of $\vec{z}$-parking functions of length $n$, where 
$\vec{z}=(z_0, z_1, \dots, z_{n-1})$ is the initial segment of the grid $\zz$; see \cite{kung}.

A concrete realization of $t_n(x;\Delta, \zz)$ for some other delta operators $\Delta$  is found in a combinatorial object called reluctant functions whose underlying structure are families of labeled trees. In \cite{goncarovdelta}, it is proved for some properly defined $\Delta$ and $\zz$, $t_n(x;\Delta, \zz)$ enumerates the number of reluctant functions in
a certain binomial class $\mathcal{B}$ whose label sequences are $\vec{z}$-parking functions.   The object of the present paper is to extend this result and prove that for any delta operator, the generalized \gon polynomials (up to a scaling) have a realization as  a weighted enumerator in partition lattices and in any exponential family.

\section{\gon Polynomials in Partition Lattices}


In this section we give a generic, or universal realization of 
generalized \gon polynomials in weighted enumeration over  partition lattices. Our result is built on Ray's solution of the realization problem for arbitrary sequences of binomial types in the context of  partition categories.  Here, we will simplify his notation and present his construction in terms of incidence algebra of partially ordered sets, which was the language originally used by combinatorialists, e.g., 
see Joni and Rota \cite{jonirota}.

For any finite set $S$, let $\Pi(S)$ denote the set of all partitions of $S$, and write $\Pi_n$ for $\Pi([n])$. Elements 
of $\Pi(S)$ are partially ordered by refinement: that is, define $\pi \leq \sigma$ if every block of $\pi$ is contained in a block of $\sigma$.  In particular, $\Pi(E)$ has a unique maximal element $\hat 1$ that has only one block and a unique minimal element $\hat 0$ for which every block is a singleton. 
Let $|\pi|$ be the number of blocks of $\pi$  and $\Pi(\pi)$ be the partitions of the set that consists of blocks of $\pi$, 
When $\pi \leq \sigma$,  the 
\emph{induced partition} $\sigma /\pi$ is the partition $\sigma$ viewed as an element of $\Pi(\pi)$.  
Define the \emph{class} of $(\pi, \sigma)$ as the sequence  
$\lambda = (\lambda_1, \lambda_2,...)$ of non-negative integers such that $\lambda_i$ is the number of blocks of size $i$ in the partition $\sigma/\pi$, for $1 \leq i \leq |\pi|$. 
It follows that 
$$
\sum_{i\geq 1} i\lambda_i = |\pi| \,\qquad \text{ and } 
\qquad \sum_{i\geq 1} \,\,\lambda_i = |\sigma|.
$$

\begin{example}
Let $E = [8]$, $\pi=\{1\},\{2\},\{345\},\{67\},\{8\}$, $\sigma=\{1345\},\{2\},\{678\} \in \Pi_8$. 
Then, $\sigma/\pi = \{(1),(345)\},\{(2)\},\{(67),(8)\} \in \Pi(\pi)$. 
 The class of $(\pi, \sigma)$ is $\lambda = (1,2,0,0,...)$, where we have  $\sum_{i\geq 1} i\lambda_i = |\pi|=5$ and $\sum_{i\geq 1} \lambda_i = |\sigma|=3$. \qed 
\end{example}

We recall the basic notation in incidence algebra. 
Let $P$ be a finite poset and $A$  a commutative ring with unity. Denote by $Int(P)$ the set of all intervals of $P$, i.e.,  the set $\{ (x,y): \ x \leq y \}$. 
The \emph{incidence algebra} $I(P, A)$ of $P$ over $A$ is the $A$-algebra of all functions 
\[
f: Int(P) \rightarrow A,  
\]
where multiplication is defined via the convolution 
\[
fg(x,y) = \sum_{ x \leq z \leq y} f(x,z) g(z,y). 
\]
The algebra $I(P, A)$ is  associative with identity $\delta$, where 
\[
\delta(x,y)=\left\{ \begin{array}{ll} 
 1, & \text{  if } x=y,  \\ 
 0, & \text{ if }  x \neq y. 
 \end{array}\right. 
 \]
An element $f \in I(P,A)$ is invertible under the multiplication if and only if $f(x,x)$ is invertible in $A$ for every $x \in P$. 

 In this paper we are concerned with the case  
$P=\Pi_n$, the  partition lattice of $[n]$,  and $A=\mathbb{K}[w_2, w_3, 
 \ldots]$, where  $w_2, w_3,...$ are  independent variables. 
 In addition, we set $w_1=1$. 

\begin{definition}
Assume  $\pi \leq \sigma$ in $\Pi_n$  and the class of $(\pi, \sigma)$ is $\lambda=(\lambda_1, \lambda_2, \ldots)$. 
Define the \emph{zeta-type function} $w(\pi, \sigma) \in I(\Pi_n,
A)$ by letting
\begin{equation} \label{Eq:zeta-type} 
w(\pi, \sigma) =  w_1^{\lambda_1} w_2^{\lambda_2} \ldots w_{|\pi|}^{\lambda_{|\pi|}}.
\end{equation} 
\end{definition} 

Note that $w(\pi, \pi)=1$ for all $\pi$. Hence $w$ is invertible. The inverse of $w$ is called the \emph{M\"obius-type function} and denoted by $\mu^w$. Explicitly, 
$\mu^w(\pi, \pi)=1$ and for $\pi < \sigma$, 
\[
\mu^w(\pi, \sigma) = - \sum_{\pi \leq \tau < \sigma} \mu^w(\pi, \tau) w(\tau, \sigma). 
\]
When all $w_i=1$, the zeta-type function and the M\"obius-type function become the zeta function and the M\"obius function of $\Pi_n$ respectively. 

\begin{example} 
Consider the lattice $\Pi_3$. Then for all $\pi < \sigma$, 
$w(\pi, \sigma)=w_2$ except that $w(\hat 0, \hat 1)=w_3$. 
Consequently, $\mu^w(\pi, \sigma)=-w_2$ if $\pi < \sigma$ except that  $
\mu^w(\hat 0, \hat 1)= 3w_2^2-w_3$. 
\qed
\end{example}

Define the \emph{zeta-type  enumerator}  $\{a_n(x;w)\}_{n \geq 0}$ and \emph{M\"obius-type enumerator} $\{b_n(x;w)\}_{n\geq 0}$ as follows. 
Let $a_0(x;w)=b_0(x;w)=1$ and for $n \geq 1$, 
\begin{eqnarray} 
a_n(x;w) & = & \sum_{ \pi \in \Pi_n} w (\hat 0, \pi)  \ x^{|\pi|}, 
\label{eq:zeta-polynomial}  \\ 
b_n(x; w) & = & \sum_{ \pi \in \Pi_n} \mu^w(\hat 0, \pi) \ x^{|\pi|}. \label{eq:mobius-polynomial} 
\end{eqnarray}

\begin{theorem} [\cite{ray}] \label{thm: Ray} 
\begin{enumerate} 
\item The polynomial sequences $\{a_n(x;w)\}_{n \geq 0}$ and 
 $\{b_n(x;w)\}_{n \geq 0}$ are of binomial type. 
\item Let $\Lambda $ be the delta operator whose $D$-indicator is given by $g(t)=t+\sum_{i \geq 2}  w_i t^i /i!$. Then 
$\{a_n(x;w)\}_{n \geq 0}$ is the conjugate sequence of $\Lambda$ and $\{b_n(x;w)\}_{n \geq 0}$ is the  basic sequence of $\Lambda$. 
\end{enumerate} 
\end{theorem}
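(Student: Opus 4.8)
The plan is to reduce both parts of the theorem to two exponential‑generating‑function identities and then read them off from the classical exponential (compositional) formula for the partition lattice. Since $A=\mathbb{K}[w_2,w_3,\dots]$ has characteristic zero and $g(t)=t+\sum_{i\ge 2}w_i t^i/i!$ satisfies $g(0)=0$, $g'(0)=1$, the series $g$ has a compositional inverse $\bar g\in A[[t]]$ with $\bar g(0)=0$, $\bar g'(0)=1$; write $\bar g(t)=t+\sum_{i\ge 2}m_i t^i/i!$ (so $m_1=1$). By \eqref{eq:2} and the description of conjugate sequences in Section 2, the conjugate sequence of $\Lambda=g(D)$ is the sequence of binomial type with exponential generating function $\exp(xg(t))$, and the basic sequence of $\Lambda$ is the sequence of binomial type with exponential generating function $\exp(x\bar g(t))$. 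Hence part (1) will follow from part (2), and for part (2) it suffices to prove
\[
\sum_{n\ge 0} a_n(x;w)\,\frac{t^n}{n!}=\exp\bigl(xg(t)\bigr)
\qquad\text{and}\qquad
\sum_{n\ge 0} b_n(x;w)\,\frac{t^n}{n!}=\exp\bigl(x\bar g(t)\bigr).
\]

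The first identity is immediate from the observation that $w$ is \emph{multiplicative} on the partition lattices: the class of $(\hat 0,\pi)$ in $\Pi_n$ is $\lambda$ with $\lambda_i=\#\{B\in\pi:|B|=i\}$, so by \eqref{Eq:zeta-type} and the convention $w_1=1$ we get $w(\hat 0,\pi)=\prod_{i\ge 1}w_i^{\lambda_i}=\prod_{B\in\pi}w_{|B|}$. Therefore $a_n(x;w)=\sum_{\pi\in\Pi_n}\prod_{B\in\pi}(x\,w_{|B|})$, and the exponential formula gives $\sum_n a_n(x;w)t^n/n!=\exp\bigl(\sum_{k\ge 1}x\,w_k t^k/k!\bigr)=\exp(xg(t))$.

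For the second identity I would first identify $\mu^w$. Let $\nu$ be the multiplicative function on $\Pi_n$ defined in analogy with \eqref{Eq:zeta-type} by $\nu(\pi,\sigma)=\prod_{i\ge 1}m_i^{\lambda_i}$, where $\lambda$ is the class of $(\pi,\sigma)$; then $\nu(\pi,\pi)=1$. I claim $\nu=\mu^w$, i.e.\ $w*\nu=\delta$ in $I(\Pi_n,A)$. Every interval $[\pi,\sigma]\subseteq\Pi_n$ decomposes, over the blocks of $\sigma$, as a product of standard partition lattices $\prod_i\Pi_i^{\lambda_i}$, and with respect to this decomposition both $w$ and $\nu$ are external products of the corresponding functions on the factors; hence $(w*\nu)(\pi,\sigma)$ factors the same way and it suffices to check $(w*\nu)(\hat 0,\hat 1)=\delta_{n,1}$ in each $\Pi_n$. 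There $w(\hat 0,\tau)=\prod_{B\in\tau}w_{|B|}$, and since the class of $(\tau,\hat 1)$ is $(0,\dots,0,1)$ with the $1$ in position $|\tau|$ we have $\nu(\tau,\hat 1)=m_{|\tau|}$; grouping $\tau\in\Pi_n$ by $k=|\tau|$ gives $(w*\nu)(\hat 0,\hat 1)=\sum_{k\ge 1}m_k\bigl(\sum_{\tau\in\Pi_n,\,|\tau|=k}\prod_{B\in\tau}w_{|B|}\bigr)$. Multiplying by $t^n/n!$, summing over $n$, and using the $k$‑block refinement of the exponential formula $\sum_n\bigl(\sum_{|\tau|=k}\prod_B w_{|B|}\bigr)t^n/n!=g(t)^k/k!$, the total becomes $\sum_{k\ge 1}m_k g(t)^k/k!=\bar g(g(t))=t=\sum_n\delta_{n,1}t^n/n!$. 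Thus $w*\nu=\delta$, so $\mu^w=\nu$; in particular $\mu^w(\hat 0,\pi)=\prod_{B\in\pi}m_{|B|}$, and one more application of the exponential formula yields $\sum_n b_n(x;w)t^n/n!=\exp\bigl(x\sum_{k\ge 1}m_k t^k/k!\bigr)=\exp(x\bar g(t))$, as required.

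I expect the only genuine work to lie in the third step: stating ``multiplicativity'' precisely (the product decomposition of intervals of $\Pi_n$ and the compatibility of $w$ and $\nu$ with it), and translating the incidence‑algebra identity $w*\nu=\delta$ into the compositional‑inverse identity $\bar g\circ g=\mathrm{id}$; the remaining manipulations with the exponential formula are routine, and the facts that $A$ has characteristic zero (so $\bar g$ exists in $A[[t]]$) and that basic and conjugate sequences of a delta operator are of binomial type are already available from Section 2. As an aside, the binomial‑type property in part (1) can also be obtained for $a_n(x;w)$ by a direct argument — reading $x$ as the size of a label set and splitting it into two groups — but the signs in $\mu^w$ make this awkward for $b_n(x;w)$, so the incidence‑algebra route above is the natural one to present.
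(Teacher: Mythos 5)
The paper does not prove this theorem: it is quoted from Ray \cite{ray}, and the surrounding text merely translates Ray's partition-category language into the incidence algebra of $\Pi_n$. So there is no in-paper proof to compare against; judged on its own, your argument is correct and is essentially the standard incidence-algebra proof that the paper's framing invites. The two generating-function identities you reduce to are the right ones for the paper's conventions (conjugate sequence of $\Lambda=g(D)$ has EGF $\exp(xg(t))$, basic sequence has EGF $\exp(x\bar g(t))$), and they check against the displayed data: your $m_2=-w_2$ and $m_3=3w_2^2-w_3$ agree with $\mu^w(\hat 0,\hat 1)$ in $\Pi_2$ and $\Pi_3$. The only substantive step is the one you flag yourself: the isomorphism $[\pi,\sigma]\cong\prod_i\Pi_i^{\lambda_i}$, the compatibility of $w$ and $\nu$ with it (the class of $(\pi,\tau)$ and of $(\tau,\sigma)$ both split as sums over the blocks of $\sigma$), and hence the multiplicativity of the convolution, which reduces $w*\nu=\delta$ to the single computation at $(\hat 0,\hat 1)$; that computation and the $k$-block exponential formula are handled correctly. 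Two minor points worth making explicit in a write-up: the paper's recursion defines $\mu^w$ as the \emph{left} inverse of $w$ while you verify that $\nu$ is a \emph{right} inverse, so you should appeal to associativity (or the noted invertibility of $w$) to conclude $\nu=\mu^w$; and the existence of $\bar g$ with coefficients in $\mathbb{K}[w_2,w_3,\dots]$ uses $\mathbb{Q}\subseteq\mathbb{K}$, as you indicate. Neither is a gap.
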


For $n=0, 1, \dots, 4$, the polynomials $a_n(w,x)$ and $b_n(w,x)$ are 
\begin{eqnarray*} 
a_0(x;w) &=&1 ,\\ 
a_1(x;w) &= & x ,\\
a_2(x;w) & = & x^2+w_2x, \\ 
a_3 (x;w)& = & x^3 + 3w_2 x^2 + w_3 x ,\\
a_4 (x;w) & = & x^4 + 6w_2 x^3 + (4w_3+3w_2^2) x^2 + w_4x ,
\end{eqnarray*} 
and 
\begin{eqnarray*} 
b_0 (x;w)& = & 1, \\
b_1 (x;w)& = & x, \\
b_2 (x;w)& = & x^2-w_2 x, \\
b_3 (x;w)& = & x^3 -3w_2 x^2 + (3w_2^2-w_3) x, \\
b_4 (x;w)& = & x^4 -6w_2x^3 +(15w_2^2-4w_3)x^2 + (10w_2w_3-w_4-15w_2^3)x.
\end{eqnarray*} 

The linear coefficient in $b_n(w;x)$ is $\mu^w_n=\mu^w(\hat 0, \hat 1)$ in $\Pi_n$. 
Assume $\Delta$ is the conjugate delta operator of $\Lambda$. 
Then $\{a_n(x;w)\}_{n\geq 0}$ is the basic sequence of $\Delta$
and $\{b_n(x;w)\}_{n\geq 0}$ is the conjugate sequence of $\Delta$. 
The operator  $\Delta$ can be written  as 
 $\Delta= \sum_{n \geq 1} \mu^w_n D^n/n!$. 
Since $w_1=1$, each $\mu^w_n$ is a polynomial of $w_2, w_3, \dots$.  If we take $w_1$ to be a variable,  $\mu^w_n$ would be a polynomial in $w_1^{-1}, w_2,  w_3, \dots$.

The condition $w_1=1$ is equivalent to the equation $a_1(x;w)=x$. 
Since the weight variables $w_2, w_3, \ldots$ can take  arbitrary values, Theorem~\ref{thm: Ray} implies  that any polynomial  sequence $\{p_n(x)\}_{n \geq 0}$ of binomial type with $p_1(x)=x$ 
can be realized as the zeta-type weight enumerator or the 
M\"obius-type weight enumerator over partition lattices. 
Note that for any scalar $k \neq 0$, if a sequence $\{ p_n(x) \}_{n \geq 0}$ is the basic sequence of $\Delta$ and the conjugate sequence of $\Lambda$, 
then $\{ p_n/k^n\}_{ n \geq 0}$ is the basic sequence of 
$k \Delta$ and the conjugate sequence of $g(D/k)$ where 
$g(t)$ is the $D$-indicator of $\Lambda$. Hence Theorem~\ref{thm: Ray} covers all polynomial sequences of binomial type up to a scaling.

In the problem of counting  assemblies  of $\mathcal{B}$-structures outlined in Section 2.1, the enumerator $\sum_{k} b_{n,k} x^k$ in Theorem \ref{mrot} is a specialization of the 
 polynomial  $a_n(x;w)$, where $w_n$ is the number of  $\mathcal{B}$-structures on a block of size $n$.  For example, 
 when $\mathcal{B}$ is the set of rooted trees, 
 $w_n=n^{n-1}$ and hence $a_n(x;w) = x(x+n)^{n-1}$, 
 the $n$-th Abel polynomial.

Our objective is to fit the generalized \gon polynomials into this model and present a combinatorial interpretation  in terms of weight-enumeration in partition lattices. 
Following the notation of Theorem~\ref{thm: Ray}, let $\Delta$ be the conjugate delta operator of $\Lambda$. 
Given an interpolation grid $\zz$, we denote by $t_n(x; w , \zz)$ the $n$-th generalized \gon polynomial relative to the pair $(\Delta, \zz)$. We use this notation to emphasize the role of the zeta-type  function $w(\pi,\sigma)$.

To get a formula for the polynomial $t_n(x; w, \zz)$, we use the recurrence \eqref{eq:gp-recurrence} in Section 2.2. 
Since  $a_n(x;w)$ is the  basic sequence of $\Delta$, 
$\{t_n(x;w,\zz)\}_{n \geq 0}$ is the unique sequence of polynomials that satisfies the recurrence 
\begin{equation} \label{recurrdef}
a_n(x;w)= \sum_{i=0}^n \binom{n}{i} a_{n-i}(z_i;w)\ t_i(x;w,\zz).
\end{equation}
In other words,  
\begin{equation} \label{recurrdef2}
t_n(x;w,\zz)=a_n(x;w)- \sum_{i=0}^{n-1} \binom{n}{i}a_{n-i}(z_i;w)\ t_i(x;w,\zz).
\end{equation}
In particular, $t_0(x; w, \zz)=1$ and $t_1(x;w,\zz)=a_1(x;w)-a_1(z_0;w) = x-z_0$. Here we again assume $w_1=1$ and hence $a_1(x;w)=x$. Since if $\Delta$ is changed to $k \Delta$, the corresponding $t_n(x; w, \zz)$ just changes to $t_n(x;w,\zz)/k^n$, again  we  cover all the cases up to a scaling. 
 
Assume $x$ is a positive integer and $X=\{1,2,\ldots, x\}$. 
Then $a_n(x;w)$ is the zeta-type weight enumerator of all the block-labeled partitions, where each block of the partition carries a label from $X$. 
In symbols,
\[
a_n(x;w) = \sum_{ \pi \in \Pi_n } w(\hat 0, \pi) \cdot 
| \{ f: \mathrm{Block}(\pi) \rightarrow X \}|,
\]
where $\mathrm{Block}(\pi)$  is the set of blocks of $\pi$. 
For a partition $\pi$ with a block-labeling $f$, 
 we record the labeling by the list $f_{\pi}=(x_1, x_2, \dots, x_n)$, where $x_i=f(B_j)$ whenever  $i$ is in the block $B_j$ of $\pi$. 

Let $\vec{z}=(z_0, z_1, \cdots, z_{n-1})$ be the initial segment of  the grid $\zz$. Furthermore, assume that $z_0 \leq z_1 \cdots \leq z_{n-1}$ are positive integers with 
$z_{n-1} < x$. 

Define the set $\mathcal{PF}_{\pi}(\zz)$ as the set of all block-labelings of $\pi$ that are also  $\vec{z}$-parking functions, i.e., 
\begin{eqnarray} \label{def:pf_pi}
\mathcal{PF}_{\pi}(\zz) = \{ f:  \mathrm{Block}(\pi) \rightarrow X\ | \  f_\pi \text{ is a $\vec{z}$-parking function} \} .
\end{eqnarray}
More precisely, $\mathcal{PF}_{\pi}(\zz)$ is the set of block-labelings of $\pi$ such that the order statistics of 
$f_{\pi}=(x_1, x_2, \dots, x_n)$ satisfies $x_{(i)} \leq z_{i-1}$ for $i=1, \dots, n$. 
Let $PF_{\pi}(\zz)$ be the cardinality of 
$\mathcal{PF}_{\pi}(\zz)$. 

Our main result of this section is the following theorem. 
\begin{theorem} \label{thm:gp-partition-lattice} 
Assume $t_n(x;w,\zz)$ is the $n$-th generalized \gon polynomial 
defined by \eqref{recurrdef}  
 with a positive increasing integer sequence $\zz=(z_0,z_1,...)$.
Let $x$ be an integer larger than $z_{n-1}$. Then,
 \begin{equation}\label{eq:gp-lattice} 
 t_n(0;\omega,-\zz) =t_n(x;\omega,x-\zz)= \sum_{\pi \in \Pi_n} w(\hat 0,\pi)\cdot PF_{\pi}(\zz),
 \end{equation}
 where $x-\zz=(x-z_0, x-z_1, x-z_2,  \dots$) and $-\zz=(-z_0, -z_1, -z_2,\dots)$.
\end{theorem}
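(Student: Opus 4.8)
The proof splits into two parts: an affine–covariance identity for generalized \gon polynomials, which yields the first equality $t_n(0;w,-\zz)=t_n(x;w,x-\zz)$, and a combinatorial decomposition of block‑labeled partitions, which yields the second equality. (Throughout I write $w$ for the zeta‑type weight function appearing in Section~3.)

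\textbf{Step 1 (affine covariance).} I would first prove that for any scalar $c$,
\[
t_n(X+c;w,\zz+c)=t_n(X;w,\zz)
\]
as polynomials in $X$, where $\zz+c=(z_0+c,z_1+c,\dots)$. This is immediate from the defining relations \eqref{eq:gp-def} together with the shift‑invariance of $\Delta$: if $s_n(X)=t_n(X;w,\zz+c)$, so that $[\Delta^i s_n](z_i+c)=n!\,\delta_{i,n}$, then since $\Delta^i E_c=E_c\Delta^i$ the polynomial $h(X):=s_n(X+c)$ satisfies $[\Delta^i h](z_i)=[\Delta^i s_n](z_i+c)=n!\,\delta_{i,n}$ for $i=0,\dots,n$, hence $h=t_n(\cdot;w,\zz)$ by uniqueness of the interpolant. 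Taking $c=x$, the grid $-\zz$, and evaluating at $X=0$ gives $t_n(x;w,x-\zz)=t_n(0;w,-\zz)$. (If an equivalent statement is already available in \cite{goncarovdelta}, I would just cite it.)

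\textbf{Step 2 (reduction to a counting identity).} Set $Q_m:=\sum_{\pi\in\Pi_m}w(\hat 0,\pi)\,PF_\pi(\zz)$. Since every $\vec{z}$‑parking function of length $m$ has all entries $\le z_{m-1}<x$, the number $Q_m$ is independent of $x$ for every integer $x>z_{m-1}$, and $Q_0=1$. Writing the recurrence \eqref{recurrdef} for the grid $x-\zz$ in the variable $X$ and evaluating at $X=x$ gives $a_n(x;w)=\sum_{i=0}^n\binom{n}{i}a_{n-i}(x-z_i;w)\,t_i(x;w,x-\zz)$. Thus, by induction on $n$, the theorem will follow once we establish the purely combinatorial identity
\[
a_n(x;w)=\sum_{i=0}^n\binom{n}{i}a_{n-i}(x-z_i;w)\,Q_i
\]
for all integers $x>z_{n-1}$.

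\textbf{Step 3 (the decomposition).} Recall $a_n(x;w)$ is the $w$‑weighted count of pairs $(\pi,f)$ with $\pi\in\Pi_n$ and $f:\mathrm{Block}(\pi)\to[x]$, weighted by $w(\hat 0,\pi)$; write $f_\pi=(x_1,\dots,x_n)$ and $\phi(c)=|\{k: x_k\le c\}|$, a nondecreasing function with $\phi(z_n)\le n$. To each $(\pi,f)$ I attach the index $i=i(\pi,f):=\min\{\,j\in\{0,1,\dots,n\}:\phi(z_j)\le j\,\}$ and the set $I:=\{k\in[n]:x_k\le z_i\}$, which is a union of blocks of $\pi$ since $f_\pi$ is constant on blocks. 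Minimality gives $|I|=\phi(z_i)=i$; that $\phi(z_{j-1})\ge j$ for $1\le j\le i$ and that $\phi$ agrees with its $I$‑restriction below $z_i$ show $(\pi|_I,f|_I)$ is a $\vec{z}$‑parking function of length $i$; and by construction every label on $[n]\setminus I$ lies in $\{z_i+1,\dots,x\}$. Conversely, gluing a $\vec{z}$‑parking‑function block‑labeling on an $i$‑element set $I$ to an arbitrary block‑labeling of $[n]\setminus I$ with labels in $\{z_i+1,\dots,x\}$ produces a pair whose attached index is exactly $i$; so the assignment is a bijection. Since the zeta‑type function is multiplicative over disjoint unions of blocks, $w(\hat 0,\pi)=w(\hat 0,\pi|_I)\,w(\hat 0,\pi|_{[n]\setminus I})$; summing over the bijection, the contribution for a fixed $i$ factors as $\binom{n}{i}$ (choices of $I$) times $Q_i$ (weighted count of the $\vec{z}$‑parking‑function piece, depending only on $|I|=i$) times $a_{n-i}(x-z_i;w)$ (weighted count of block‑labeled partitions of $[n]\setminus I$ into $x-z_i$ available labels). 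This is the identity of Step~2, and the proof is complete.

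\textbf{Main obstacle.} The technical heart is Step~3: one must isolate the canonical splitting index $i(\pi,f)$ so that $(\pi,f)\mapsto\bigl(I,(\pi|_I,f|_I),(\pi|_{[n]\setminus I},f|_{[n]\setminus I})\bigr)$ is a genuine bijection — in particular that $i$ can be read back off the glued data, which rests on the sharp inequalities $\phi(z_j)\ge j+1$ for $j<i$ together with $\phi(z_i)=i$ — and to check that $I$ is always a union of blocks, so that the restricted partitions and the factorization of $w$ are legitimate. By comparison, the affine‑covariance identity in Step~1 is routine once shift‑invariance of $\Delta$ is invoked.
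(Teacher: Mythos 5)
Your proposal is correct and follows essentially the same route as the paper: the first equality is the shift-invariance $t_n(X+c;w,\zz+c)=t_n(X;w,\zz)$ (which the paper cites from \cite{goncarovdelta} and reproves later as Theorem~\ref{thm:shift}), and your Step~3 decomposition is exactly Lemma~\ref{mainlem} — your splitting index $\min\{j:\phi(z_j)\le j\}$ coincides with the paper's $i(f)=\max\{k: x_{(j)}\le z_{j-1}\ \forall j\le k\}$, and the set $I$ is the paper's $R_1$. The only cosmetic difference is that you phrase the cut via the counting function $\phi$ rather than via order statistics, and you spell out the affine covariance from the interpolation conditions rather than citing it.
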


The first equality follows from \cite[Prop.3.5]{goncarovdelta} 
that was proved by verifying the defining equation \eqref{eq:gp-def},
and the second equality follows from  the recurrence \eqref{recurrdef} and Lemma \ref{mainlem} proved next. Note that  all three parts of \eqref{eq:gp-lattice} are polynomials of $z_0, z_1, \dots, z_{n-1}$, hence \eqref{eq:gp-lattice} is a polynomial identity.

\begin{lemma} \label{mainlem}
For every $n \geq 0$, it holds that
\begin{equation} \label{maineq}
   a_n(x;w)= \sum_{i=0}^n \binom{n}{i} a_{n-i}(x-z_i;w) 
      \sum_{\pi \in \Pi_i}  w(\hat 0,\pi)  \cdot PF_{\pi}(\zz)
\end{equation} 
\end{lemma}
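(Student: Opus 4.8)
The right-hand side of \eqref{maineq} is essentially a convolution, so the natural strategy is to interpret both sides as counting (weighted) pairs of a partition of $[n]$ together with a block-labeling from $X=\{1,\dots,x\}$, and to split the count according to which labels are "small" (belong to the parking-function part) versus "large." Concretely, recall that $a_n(x;w)=\sum_{\pi\in\Pi_n} w(\hat 0,\pi)\,x^{|\pi|}$ counts, with weight $w(\hat 0,\pi)$, all maps $f:\mathrm{Block}(\pi)\to X$. I would partition this set of weighted block-labeled partitions of $[n]$ by looking at the induced list $f_\pi=(x_1,\dots,x_n)$ and extracting the sub-object supported on the positions whose label lies in $\{1,2,\dots\}$ in a way that "uses up" the parking constraint. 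The cleanest device: for a block-labeled partition $(\pi,f)$ of $[n]$, let $S=\{\,j\in[n] : x_j \le z_{?}\,\}$ — but one must choose the threshold correctly. The correct combinatorial picture (this is exactly the order-statistic/parking-function decomposition behind Kung--Yan) is to let $i$ be chosen so that the labels break into an "$i$-element $\vec z$-parking part" plus an "$(n-i)$-element part shifted into $\{1,\dots,x-z_i\}$."

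The key step is the following bijective decomposition. Given $\pi\in\Pi_n$ and $f:\mathrm{Block}(\pi)\to X$, consider the multiset of values of $f_\pi$ and sort them; there is a unique maximal prefix of the sorted sequence that forms a $\vec z$-parking function of some length $i$ while the remaining $n-i$ values all exceed $z_i$ (equivalently: a given labeled structure decomposes uniquely as a labeled structure on a set $A$ of size $i$ whose labeling is a $\vec z$-parking function, together with a labeled structure on $[n]\setminus A$ of size $n-i$ all of whose block-labels exceed $z_i$, hence lie in a set of size $x-z_i$). The subtlety is that this decomposition must respect the block structure of $\pi$: because the labeling is constant on blocks, the set $A$ of "small-label positions" is automatically a union of blocks of $\pi$, so $\pi$ restricts to a partition $\pi|_A\in\Pi(A)\cong\Pi_i$ and a partition $\pi|_{[n]\setminus A}$. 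Under the multiplicativity of the zeta-type function along the two pieces — here I would invoke the product structure in \eqref{Eq:zeta-type}, namely $w(\hat 0,\pi)=w(\hat 0,\pi|_A)\cdot w(\hat 0,\pi|_{[n]\setminus A})$ when $A$ is a union of blocks — and the $\binom{n}{i}$ ways of choosing which $i$ of the ground elements form $A$, the weighted count on the left reorganizes exactly into the right-hand side of \eqref{maineq}: the inner sum $\sum_{\pi\in\Pi_i} w(\hat 0,\pi)\,PF_\pi(\zz)$ counts the small part, and $a_{n-i}(x-z_i;w)$ counts the large part.

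I would carry this out in the order: (1) fix $x$ a large integer, expand the left side as the weighted number of block-labeled partitions of $[n]$; (2) define, for each such object, the canonical index $i$ and the canonical "parking core" $A$ via the order-statistics prefix condition, citing the analogous argument for vector parking functions in \cite{kung} (or reproving the one-line order-statistics fact that every sequence decomposes uniquely this way); (3) check that $A$ is a union of $\pi$-blocks and verify the multiplicativity $w(\hat 0,\pi)=w(\hat 0,\pi|_A)w(\hat 0,\pi|_{A^c})$ from the definition of the class and \eqref{Eq:zeta-type}; (4) sum over the choice of underlying $i$-set (contributing $\binom ni$) and over the two independent partition/labeling data to match \eqref{maineq}; (5) observe both sides are polynomials in $z_0,\dots,z_{n-1}$ (and in $x$), so the identity proven for integers $x>z_{n-1}$ holds as a polynomial identity, removing the positivity/integrality hypotheses. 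The main obstacle, I expect, is step (2)–(3): pinning down precisely the right threshold so that the "large" part lands in a set of size exactly $x-z_i$ and confirming that the prefix/core is uniquely determined and block-closed; this is the heart of translating the order-statistic condition into the convolution, and it is where an off-by-one in the grid indices ($z_i$ versus $z_{i-1}$, and the shift $x-\zz$ versus $-\zz$) is easy to get wrong. Once that decomposition is nailed down, the weight bookkeeping is routine given the multiplicativity of $w$.
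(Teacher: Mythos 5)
Your proposal is correct and follows essentially the same route as the paper: expand $a_n(x;w)$ as the $w(\hat 0,\pi)$-weighted count of block-labeled partitions, use the maximal order-statistics prefix to define the canonical index $i$ and the block-closed ``parking core'' $R_1$ (with the remaining labels forced into $\{z_i+1,\dots,x\}$, a set of size $x-z_i$), and invoke multiplicativity of $w(\hat 0,\cdot)$ over the two pieces together with the $\binom{n}{i}$ choices of underlying set. The threshold you flag as the delicate point is exactly the one the paper uses, and your handling of it is correct.
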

\begin{proof}
Again  we assume that $x$ and $z_i$ are positive integers and $z_0 < z_1 < \cdots  < z_{n-1} < x$. 
For a finite set $E$ and $P$, 
let $\mathcal{S}(E, P)$ be the set of pairs $(\pi, f)$ where $\pi$ is a partition of the set $E$ and $f$ is a function from $\mathrm{Block}(\pi)$ to $P$. 
Then the left-hand  side of \eqref{maineq} counts the set
$\mathcal{S}([n], X)$ by the zeta-type weight function 
$w(\hat 0, \pi)$. Note that if $\pi$ has blocks $B_1, B_2, \dots, B_k$, then 
\[
w(\hat 0, \pi) = \prod_{j=1}^k w_{|B_j|}. 
\]

For a pair $(\pi, f) \in \mathcal{S}([n], X)$ with 
$f_{\pi}=(x_1, x_2, \dots, x_n)$,
let $\mathbf{inc}(f_\pi)=(x_{(1)}, x_{(2)}, \dots, x_{(n)})$ be the non-decreasing rearrangement of the terms of $f_{\pi}$. 
Set 
\[
i(f)=\max \{k: x_{(j)} \leq z_{j-1} \, \forall j \leq k \}.
\]
Thus, the maximality of $i=i(f)$ means that
\[
x_{(1)} \leq z_0, \,\,x_{(2)} \leq z_1,...\,,x_{(i)} \leq z_{i-1}
\]
 and 
\[
z_i < x_{(i+1)} \leq x_{(i+2)} \leq \cdots \leq x_{(n)} \leq x.\]
In the case that $x_{(j)} > z_{j-1}$ for all $j$, we have  $i(f)=0$.

Assume $(x_{r_1},...,x_{r_{i}})$ is the subsequence of 
$f_{\pi}$ from which the non-decreasing sequence $(x_{(1)}, x_{(2)},..., x_{(i)})$ is obtained. Let $R_1=\{ r_1, r_2, \dots, r_i\} \subseteq [n]$.  Then it is easy to see that $R_1$ must be 
a union of some blocks of $\pi$, while $R_2= [n]\setminus R_1$ is the union of the remaining blocks of $\pi$. 
Let $\pi_1$ be the restriction of $\pi$ on $R_1$ and $\pi_2$ the restriction of $\pi$ on $R_2$. Thus $\pi$ is a disjoint union of $\pi_1$ and $\pi_2$. 
Furthermore, let $f_i$ be the restriction of $f$ on $R_i$. Then 
$f_1$ is a map from the blocks of $\pi_1$ to $\{1, \dots, z_i\}$ that is also a $\vec{z}$-parking function, and $f_2$ is a map from blocks of $\pi_2$ to the set $X\setminus [z_{i}] = \{ 
z_{i}+1, \ldots, x\}$.  

Let $\mathcal{S}^P(E, X)$ be the subset of $\mathcal{S}(E, X)$ such that for each pair $(\pi, f)$, the sequence $f_\pi$ is a 
$\vec{z}$-parking function. Then the above argument defines a decomposition of $(\pi, f) \in \mathcal{S}([n], X)$ into pairs 
$(\pi_1, f_1) \in \mathcal{S}^P(R_1, X)$ and 
$(\pi_2, f_2) \in \mathcal{S}(R_2, X\setminus [z_{i}])$. 
Conversely, any pairs of $(\pi_1, f_1)$ and $(\pi_2, f_2)$ 
described above can be reassembled into a partition $\pi$ of $[n]$ with labels in $X$. In other words, the set $\mathcal{S}([n], X)$ can be written as a disjoint union of Cartesian products  as 
\begin{equation} \label{eq:decomposition}
\mathcal{S}([n], X) = 
\bigcupdot_{i; R_1 \subseteq [n]:  |R_1|=i} 
\ \mathcal{S}^P(R_1, X) \times \mathcal{S}(R_2, X\setminus [z_{i}]).
\end{equation}
In addition, if $\pi$ is the disjoint union of $\pi_1$ and $\pi_2$, then 
\[
w(\hat 0, \pi) = w(\hat 0, \pi_1) w(\hat 0, \pi_2).
\]
Putting the above results together, we have 
\begin{eqnarray*} 
a_n( x;w) &= & \sum_{(\pi, f) \in \mathcal{S}([n],X) } w(\hat 0, \pi) \\ 
& =  & \sum_{i=0}^n \sum_{R_1: |R_1|=i} \left( 
\sum_{(\pi_1, f_1) \in \mathcal{S}^p(R_1, X)} w(\hat 0, \pi_1) 
\cdot 
\sum_{(\pi_2, f_2) \in \mathcal{S}(R_2, X\setminus [z_{i}]) } w(\hat 0, \pi_2) \right) \\ 
& = & \sum_{i=0}^n \binom{n}{i} a_{n-i}( x-z_{i};w) 
\sum_{(\pi_1, f_1) \in \mathcal{S}^p(R_1, X)} w(\hat 0, \pi_1) \\
 & = & \sum_{i=0}^n \binom{n}{i} a_{n-i}(x-z_{i};w) 
 \sum_{\pi \in \Pi_i} w(\hat 0, \pi) PF_{\pi}(\zz) .
\end{eqnarray*} 
The last equation follows from the definition of $PF_{\pi}(\zz)$. 
\end{proof}

\begin{example} 
From the recurrence \eqref{recurrdef} we get 
\[
t_2(x;w, \zz)= x^2+ (w_2-2z_1)x+ (2z_0z_1-z_0^2-w_2z_0).
\]
Hence $t_2(0; w, -\zz) = 2z_0z_1-z_0^2+w_2 z_0$.  
On the other hand, there are two partitions in $\Pi_2$. 
For $\pi=\{12\}$, clearly $w(\hat 0, \{12\})=w_2$ and $PF_{ \{12\}}( \zz)=z_0$. For $\pi=\{1\}\{2\}$, $w(\hat 0, \pi)=1$ 
and $PF_\pi(\zz)$ is the number of pairs of positive integers $(x,y)$ such that $\min(x,y) \leq z_0$ and $\max(x,y) \leq z_1$. It is easy to check that there are $2z_0z_1-z_0^2$ such pairs. 
\qed
\end{example}

Since $\{ a_n(x;w)\}$ gives a generic form of the sequence of polynomials of binomial type, $\{t_n(x; w, \zz)\}$ 
is the generic form of the generalized \gon polynomials. 
In particular,  from Theorem~\ref{thm:gp-partition-lattice} 
we see that when $w_2=w_3=\cdots =0$, $t_n(0; w, -\zz)$
gives the number of $\vec{z}$-parking functions of length $n$.

\section{\gon Polynomials in Exponential Families}


In this section, we explore a more concrete realization of  generalized \gon polynomials in exponential families, which are  picturesque models that deal with counting structures that are built out of connected pieces and can be applied to many combinatorial  problems.

\subsection{Exponential Families}
Exponential families are  combinatorial models based on the partition lattices where the enumeration are captured by the exponential generating functions. The description of exponential families and their relation to the incidence algebra of $\Pi_n$ can be found in standard textbooks, 
e.g., \cite[Section 5.1]{EC2}. 
Here we adopt  Wilf's description of exponential families \cite{wilf} in the context of `playing cards' and `hands'.

Suppose that there is given an abstract set $P$ of `pictures', which typically are the connected structures. A \emph{card}
$\mathcal{C}(S, p)$ is a pair consisting of a finite label set 
$S$ of positive integers and a picture $p \in P$. The weight of
$\mathcal{C}$ is $|S|$. If $S=[n]$, the card is called \emph{standard}. 
A \emph{hand} $H$  is a set of cards whose label sets form a partition of $[n]$ for some $n$. The weight of a hand is the sum of the weights of the cards in the hand.  
The \emph{$n$-th deck} $\mathcal{D}_n$ is the set of all standard cards of weights $n$. We require that  $\mathcal{D}_n$ is always finite. 
An \emph{exponential family} $\mathcal{F}$ is the  collection of decks $\mathcal{D}_1, \mathcal{D}_2, \dots $. 

In an exponential family, let $d_i=|\mathcal{D}_i|$ and $h_{n,k}$ be the number of hands $H$ of weight $n$ that consist of $k$ cards. Let $h_0(x)=1$ and for $n \geq 1$, 
\begin{equation}‎
h_{n}(x)= \sum_{k = 1}^n h_{n,k} x^k. 
\end{equation}
 Then the main counting theorem, \emph{the exponential formula}, 
states that  these polynomials satisfy the generating relation: 
\begin{equation} \label{eq:h-gf}
e^{x{D}(t)}= ‎‎\sum\limits_{n\geq 0} h_{n}(x) \frac{t^n}{n!},
\end{equation}
where $D(t)=\sum_{k \geq 1} d_k t^k/k!$. 
In other words, if $d_1=h_{1,1}\neq 0$, $\{h_n(x)\}_{n \geq 0}$ is a sequence of binomial type that is conjugate to the delta operator 
$\sum_{k \geq 1} d_k D^k/k!$.

\begin{example} \label{example4-partition}
 \textit{Set Partitions}:
Here, a card is a label set $[n]$ with a `picture' of $n$ dots. Each deck $\mathcal{D}_n$ consists of the single card of weight $n$, and a hand  is just a partition of the set $[n]$. Thus, $h_{n,k}$ is the  number of partitions of the set $[n]$ into $k$ classes, which is $S(n,k)$, the Stirling number of the second kind.   \qed 
\end{example}

\begin{example} \label{example5-cycles} 
 \textit{Permutations and their Cycles}:
Each card is a cyclic permutation on a label set $S$. 
The deck $\mathcal{D}_n$ consists of all distinct cyclic permutations on $[n]$ so $d_n=(n-1)!$ and a hand is a permutation of $[n]$ consisting of $k$ cycles. Thus, $h_{n,k}$ is the number of permutations on $[n]$ that have $k$ cycles, that is, the signless Stirling number of the first kind 
$c(n,k)$. \qed 
\end{example} 

Note that we can interpret  $x^k$ in $h_n(x)$ as the 
number of maps from the set of cards in a hand to the set 
$X=\{1,\dots, x\}$ for some positive integer $x$. 
Hence $h_n(x)$ counts the number of hands of weight $n$ in which each card is labeled by an element of $X$. 
This set-up gives 
a natural combinatorial interpretation for  binomial polynomial sequences whose coefficients are positive integers. 

In \emph{Foundation III}  \cite{mullinrota} Mullin and Rota 
introduced a structure called \emph{reluctant functions}, 
which can be used to give a combinatorial interpretation for some generalized \gon polynomials; see \cite{goncarovdelta}. We remark that reluctant 
functions is a special case of exponential families, in which 
the  `pictures' are certain sets of trees. 
We will show 
that by taking the type enumerator an exponential family actually provides a combinatorial model for all generalized \gon polynomials. 

\vanish{
\begin{remark}
Let $S$  and $X$ be finite disjoint sets. We recall from \cite{mullinrota} that a reluctant function from $S$ to $X$  is a function $f:S\to S\cup X$, such that for every $s \in S$ there is a positive integer $k=k(s)$ with $f^k(s) \in X$. The resulting partition that arises from this construction has a natural combinatorial structure of a rooted forest with its rooted trees belonging to some binomial class $B$. The concept of exponential families generalizes reluctant functions since for each such binomial class $B$, a card consists of all $s\in S$ such that $f^k(s)=x_i$ $(k>0)$ for some fixed $x_i \in X$. The picture on a card is the underlying tree structure of $B$. The deck $D_n$ consists all trees on $n$ nodes of the particular family $B$, and a hand is a forest of rooted trees, where each such tree is in $B$. Thus, $h(n,k)$ is the number of forests on $n$ vertices consisting of $k$ rooted trees, each of type $B$.  
\end{remark}
}

\subsection{Type Enumerator in  Exponential Families}

In a given exponential family $\mathcal{F}$, 
we have seen that 
\begin{equation} \label{eq-h(x)}
h_n(x)= \sum_{k=1}^n h_{n,k}x^k= \sum_{H} | \{ f: \text{ cards in $H$ } \rightarrow X \} | 
\end{equation} 
where $H$ ranges over all hands of weight $n$.  
For a hand $H$ consisting of cards $\mathcal{C}_1$, 
$\mathcal{C}_2, \dots, \mathcal{C}_k$ of weights $t_1, t_2, \dots, t_k$, define the type of $H$ as 
\[
\mathrm{type}(H) = y_{t_1} y_{t_2} \cdots y_{t_k}, 
\]
where $y_1, y_2, \dots, $ are free variables. 

Let 
\begin{equation} \label{eq:h-type} 
h_n(x;\mathbf{y}) = \sum_{H: \text{ weight } n } \mathrm{type}(H) | \{ f: \text{ cards in $H$ } \rightarrow X \} |. 
\end{equation} 
Then we have the following form of the exponential formula.


\begin{prop}\label{prop:type-exponential}
The sequence of type enumerators  $\{ h_n(x;\mathbf{y}) \}_{n\geq 0}$, 
viewed as a polynomial in $x$,  is a sequence of polynomials of binomial type satisfying the equation 
\begin{equation} \label{eq:h-type-gf} 
\sum_{n \geq 0} h_n(x;\mathbf{y}) \frac{t^n}{n!} 
= \exp\left(x \sum_{k \geq 1} d_k y_k \frac{t^k}{k!} \right) .
\end{equation} 
\end{prop}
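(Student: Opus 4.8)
The plan is to prove the generating function identity \eqref{eq:h-type-gf} first, and then derive the binomial type property as a formal consequence of the fact that the right-hand side has the shape $\exp(x \cdot G(t))$ with $G(0)=0$. To establish \eqref{eq:h-type-gf}, I would set up a two-variable exponential generating function bookkeeping both the weight $n$ (tracked by $t^n/n!$) and the card labels (tracked by powers of $x$), but now additionally weighting each card of weight $k$ by $d_k y_k$ rather than just $d_k$. The key observation is that a hand of weight $n$ is, by definition, a set of cards whose label sets partition $[n]$; so by the standard exponential formula / compositional principle for labeled structures, the EGF for hands in which each card is decorated by an element of $X$ and carries the weight-type variable is obtained by exponentiating the EGF for a single decorated card. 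A single standard card of weight $k$ contributes $d_k$ choices of picture, $|X|=x$ choices of decoration label, and the monomial $y_k$, so the single-card EGF is $\sum_{k\ge 1} x\, d_k\, y_k\, t^k/k!$. Exponentiating gives exactly the right-hand side of \eqref{eq:h-type-gf}, and expanding $\exp$ and collecting the coefficient of $t^n/n!$ recovers $\sum_H \mathrm{type}(H)\,|\{f\colon \text{cards of }H\to X\}|$, which is $h_n(x;\mathbf{y})$ by \eqref{eq:h-type}.

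For the binomial type claim, I would argue as follows. Treat $\mathbf{y}=(y_1,y_2,\dots)$ and the deck sizes $d_k$ as fixed, so that $G(t) := \sum_{k\ge 1} d_k y_k t^k/k!$ is a formal power series with $G(0)=0$. Then \eqref{eq:h-type-gf} says $\sum_{n\ge 0} h_n(x;\mathbf y) t^n/n! = \exp(xG(t))$. Multiplying the generating functions for arguments $u$ and $v$,
\[
\exp(uG(t))\exp(vG(t)) = \exp\big((u+v)G(t)\big),
\]
and comparing coefficients of $t^n/n!$ on both sides yields precisely the binomial identity
\[
h_n(u+v;\mathbf y) = \sum_{i=0}^n \binom{n}{i} h_i(u;\mathbf y)\, h_{n-i}(v;\mathbf y).
\]
Together with $h_0(x;\mathbf y)=1$ and $\deg_x h_n(x;\mathbf y)=n$ (immediate since a hand of weight $n$ has between $1$ and $n$ cards, so $h_n(x;\mathbf y)$ is a polynomial in $x$ of degree exactly $n$ with leading coefficient the monomial $y_1^n$ times the number of hands with $n$ singleton cards), this is exactly the definition of a sequence of binomial type. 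Alternatively one can invoke \eqref{eq:2}: since $G(t)$ has $G(0)=0$ and $G'(0)=d_1 y_1 \ne 0$, it is the $D$-indicator of some conjugate delta operator, and $\{h_n(x;\mathbf y)\}$ is by \eqref{eq:2} its conjugate sequence, hence of binomial type.

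The only genuinely delicate point is justifying the "exponentiate the single-card EGF" step rigorously, i.e. the claim that hands are the $\mathrm{SET}$-construction applied to decorated cards in the species-theoretic / EGF sense, so that the product formula for EGFs of labeled structures applies. This is standard (it is exactly the content of the ordinary exponential formula \eqref{eq:h-gf}, here refined by multiplying the weight of a size-$k$ card by the extra commuting formal variable $y_k$, which does not interfere with the labeled-product bookkeeping); I would present it by mimicking the proof of \eqref{eq:h-gf} verbatim, observing that introducing the formal variables $y_k$ and the label map into $X$ each multiply the contribution of a card by a factor depending only on that card, hence pass through the exponential formula unchanged. No new combinatorial idea is needed beyond what is already used for \eqref{eq:h-gf} and \eqref{eq-h(x)}; the proposition is essentially a formal enrichment of the classical exponential formula.
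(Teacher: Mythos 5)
Your proposal is correct and follows essentially the same route as the paper: the paper's proof simply observes that $h_n(x;\mathbf{y})$ is obtained from $h_n(x)$ by replacing each deck size $d_k$ with $d_k y_k$ in the hand-by-hand expansion and then invokes the exponential formula \eqref{eq:h-gf}, which is exactly your ``the $y_k$'s pass through the exponential formula unchanged'' argument, with the binomial identity read off from the $\exp(xG(t))$ form. One small caveat: your parenthetical claim that $\deg_x h_n(x;\mathbf{y})=n$ with leading coefficient $y_1^n$ times the number of all-singleton hands fails when $d_1=0$ (the degenerate case treated later in the paper), but this does not affect your derivation of the binomial identity, which is the substance of the proposition.
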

\begin{proof} 
We compare the formula of $h_n(x; \mathbf{y})$ with that of $h_n(x)$. Note for $n \geq 1$, $h_n(x)$ can be computed by
\begin{equation}
h_n(x) = \sum_{k\geq 1} \sum_{H=\{ \mathcal{C}_1, \dots, \mathcal{C}_k\} }d_{t_1} d_{t_2} \cdots d_{t_k} x^k,
\end{equation}
where $\{\mathcal{C}_1, \cdots, \mathcal{C}_k\}$ is a hand of weight $n$ and 
$t_i$ is the weight of card $\mathcal{C}_i$, while 
\begin{equation}
h_n(x;\mathbf{y}) = \sum_{k\geq 1} \sum_{H=\{ \mathcal{C}_1, \dots, \mathcal{C}_k\} } d_{t_1} d_{t_2} \cdots d_{t_k} 
y_{t_1}y_{t_2} \cdots y_{t_k}  x^k. 
\end{equation}
The exponential formula for $h_n(x)$ then implies Proposition~\ref{prop:type-exponential}.  
\end{proof} 

\begin{remark} 
Comparing to the generic form $a_n( x;w)$ in the previous section, we see that $h_n(x;\mathbf{y})$ corresponds to the case where the variables in the zeta-type function are determined by  
$w_n=d_ny_n$. As far as $d_1\neq 0$, we can obtain arbitrary
polynomial  sequences of binomial type by taking  suitable values for the $y_i$-variables.

\end{remark}

\subsection{Sequence of Generalized Goncarov Polynomials} 
Let $Z=(z_i)_{i\geq 0}$ be an interpolation grid.  For the 
binomial sequence $\{h_n(x;\mathbf{y})\}_{n \geq 0}$ defined in an 
exponential family $\mathcal{F}$, we can consider the 
associated generalized \gon polynomials given by \eqref{eq:gp-recurrence} with $p_n(x)$ replaced by $h_n(x;\mathbf{y})$. 
Denote this \gon polynomial by $t_n(x; \mathbf{y}, \mathcal{F}, \zz)$ to emphasize that it has variables $y_i$ and is defined in $\mathcal{F}$. Explicitly,  
 $t_n(x; \mathbf{y}, \mathcal{F}, \zz)$ is obtained by the recurrence 
\begin{equation}\label{eq:4}
t_{n}(x;\mathbf{y},\mathcal{F}, \zz)= h_n(x;y) - \sum\limits_{i=0}^{n-1} \binom{n}{i} h_{n-i}(z_i;y) t_i(x;\mathbf{y}, \mathcal{F}, \zz).
\end{equation}

Suppose $X=\{1,2,...,x\}$ and assume that $z_0 \leq z_1 \leq \cdots \leq z_{n-1}$ are integers in $X$.
Let $\vec{z}=(z_0, z_1, \dots, z_{n-1})$. 
For a hand $H=\{ \mathcal{C}_1, \mathcal{C}_2, \dots, \mathcal{C}_k\}$ of weight $n$ with a function $f$ from 
$\{\mathcal{C}_1, \mathcal{C}_2, \dots, \mathcal{C}_k\}$ to  $X$, 
denote by $f_H$ the list $(x_1, x_2, \dots, x_n)$, where 
$x_i = f(\mathcal{C}_j)$ if $i$ is in the label set of $\mathcal{C}_j$. 
Let 
\[
\mathcal{PF}_H(\zz) = \{ f: \{\mathcal{C}_1, \mathcal{C}_2, \dots, \mathcal{C}_k\} \rightarrow X\ | \ f_H \text{ is a 
$\vec{z}$-parking function} \},
\]
and $PF_H(\zz)$ the cardinality of $\mathcal{PF}_H(\zz)$. 
Then we have the following analog of Theorem~\ref{thm:gp-partition-lattice}. 

\begin{theorem}\label{thm:gp-exponential}
For $n \geq 0$, 
\begin{equation}\label{eq:main2} 
 t_n(0; \mathbf{y}, \mathcal{F}, -\zz) =t_n(x; \mathbf{y}, \mathcal{F},x-\zz)= \sum_{H: \text{ of weight } n} \mathrm{type}(H) \cdot 
 PF_{H}(\zz).  
 \end{equation}
 Here by convention, the third term of \eqref{eq:main2}  equals  $1$ when $n=0$. 
 \end{theorem}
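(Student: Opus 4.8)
The plan is to prove Theorem~\ref{thm:gp-exponential} by reducing it to the already-established partition-lattice version, Theorem~\ref{thm:gp-partition-lattice}, together with the dictionary $w_n = d_n y_n$ recorded in the Remark following Proposition~\ref{prop:type-exponential}. First I would observe that $t_n(x;\mathbf{y},\mathcal{F},\zz)$ is defined by exactly the same recurrence \eqref{eq:4} as $t_n(x;w,\zz)$ in \eqref{recurrdef2}, once we substitute $h_{n}(x;\mathbf{y})$ for $a_n(x;w)$; and by Proposition~\ref{prop:type-exponential} together with Theorem~\ref{thm: Ray}, the sequence $\{h_n(x;\mathbf{y})\}_{n\geq 0}$ is precisely the binomial sequence $\{a_n(x;w)\}_{n\geq 0}$ under the specialization $w_n = d_n y_n$. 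Hence $t_n(x;\mathbf{y},\mathcal{F},\zz)$ equals $t_n(x;w,\zz)$ under the same specialization, and the first equality $t_n(0;\mathbf{y},\mathcal{F},-\zz) = t_n(x;\mathbf{y},\mathcal{F},x-\zz)$ follows immediately from the first equality in \eqref{eq:gp-lattice}.

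For the second equality I would prove the exponential-family analog of Lemma~\ref{mainlem} directly, which is cleaner than pushing the specialization through the partition-lattice combinatorics. Taking $x$ and the $z_i$ to be positive integers with $z_0 < z_1 < \cdots < z_{n-1} < x$, I would let $\mathcal{S}(E,X)$ be the set of pairs $(H,f)$ where $H$ is a hand whose label sets partition $E$ and $f$ maps the cards of $H$ into $X$, and weight such a pair by $\mathrm{type}(H)$. Then $h_n(x;\mathbf{y})$ is the $\mathrm{type}$-enumerator of $\mathcal{S}([n],X)$ by \eqref{eq:h-type}. Given $(H,f)$, form $f_H = (x_1,\dots,x_n)$ and its nondecreasing rearrangement, set $i(f) = \max\{k : x_{(j)} \leq z_{j-1}\ \forall j\leq k\}$, let $R_1$ be the index set supporting $(x_{(1)},\dots,x_{(i)})$ and $R_2 = [n]\setminus R_1$. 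The crucial structural point is that $R_1$ is a union of label sets of cards of $H$ (since cards carrying a label $\le z_i$ stay together), so $H$ splits as a hand $H_1$ on $R_1$ and a hand $H_2$ on $R_2$, with $\mathrm{type}(H) = \mathrm{type}(H_1)\mathrm{type}(H_2)$, $f_1$ a $\vec{z}$-parking function into $[z_i]$, and $f_2$ a map into $X\setminus[z_i]$. This gives a disjoint-union decomposition
\begin{equation*}
\mathcal{S}([n],X) = \bigcupdot_{i;\, R_1\subseteq[n]:\, |R_1|=i} \mathcal{S}^P(R_1,X)\times \mathcal{S}(R_2, X\setminus[z_i]),
\end{equation*}
and summing $\mathrm{type}$ over it, using $|\mathcal{S}(R_2, X\setminus[z_i])| = h_{n-i}(x-z_i;\mathbf{y})$ as a $\mathrm{type}$-enumerator and $\sum_{H:\,|H|=i,\text{ on }R_1}\mathrm{type}(H)\,PF_H(\zz)$ for the first factor, yields the analog of \eqref{maineq}:
\begin{equation*}
h_n(x;\mathbf{y}) = \sum_{i=0}^n \binom{n}{i} h_{n-i}(x-z_i;\mathbf{y}) \sum_{H:\text{ weight }i} \mathrm{type}(H)\cdot PF_H(\zz).
\end{equation*}
Comparing this with the defining recurrence \eqref{eq:4} for $t_n(x;\mathbf{y},\mathcal{F},x-\zz)$ and invoking the uniqueness of the solution to that recurrence (equivalently, the uniqueness in \eqref{eq:gp-recurrence}) identifies $t_n(x;\mathbf{y},\mathcal{F},x-\zz)$ with $\sum_{H:\text{ weight }n}\mathrm{type}(H)\cdot PF_H(\zz)$. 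Since all three expressions in \eqref{eq:main2} are polynomials in $z_0,\dots,z_{n-1}$, the identity established for increasing positive integers propagates to a polynomial identity.

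I expect the main obstacle to be the verification that $R_1$ is genuinely a union of label sets of cards of $H$, i.e.\ that the decomposition respects the card structure rather than just the underlying partition. In the partition-lattice setting this was the assertion that $R_1$ is a union of blocks of $\pi$; here one must additionally check that each card of $H$ is entirely inside $R_1$ or entirely inside $R_2$, that the restricted objects $H_1,H_2$ are again legitimate hands in $\mathcal{F}$ (their cards being the original cards, with picture unchanged and label sets intact), and that $\mathrm{type}$ is multiplicative across the split — all of which hold because a card's membership in $R_1$ is determined by whether its common label value is $\le z_i$. A secondary point requiring care is that in defining the $\mathrm{type}$-enumerator of $\mathcal{S}(R_2,X\setminus[z_i])$ one gets $h_{|R_2|}$ evaluated at $|X\setminus[z_i]| = x - z_i$, matching the shifted grid $x-\zz$; keeping the bookkeeping of which $z_i$ shifts which argument straight is the kind of routine-but-delicate step I would write out explicitly.
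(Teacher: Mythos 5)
Your argument matches the paper's: the second equality is obtained exactly as in the paper, namely by proving the recurrence \eqref{eq:mainlemma2} via the hand-level analogue of the decomposition in Lemma~\ref{mainlem} (the paper lists the two key ingredients and omits the details you supply, including the check that $R_1$ is a union of card label sets because labels are constant on cards and $x_{(i)}\leq z_i < x_{(i+1)}$), and the first equality rests on the same shift-invariance fact used for Theorem~\ref{thm:gp-partition-lattice}. The only point worth flagging in your reduction $w_n=d_ny_n$ is that the partition-lattice setup normalizes $w_1=1$, so one needs $d_1y_1=1$ or the scaling remark following Theorem~\ref{thm: Ray}; alternatively the first equality follows directly from the recurrence-based shift invariance of Theorem~\ref{thm:shift}.
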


Theorem~\ref{thm:gp-exponential}  follows from \eqref{eq:4} 
and the following recurrence relation 
\begin{equation}
\label{eq:mainlemma2}
   h_n(x;\mathbf{y})= \sum_{i=0}^n \binom{n}{i} h_{n-i}(x-z_i;\mathbf{y}) 
      \sum_{H: \text{ of weight } i}  \mathrm{type}(H) \cdot PF_{H}(\zz), 
\end{equation}
whose proof is similar to that of Lemma \ref{mainlem}.
In an exponential family $\mathcal{F}$, let 
$\mathcal{A}(S, X)$ be the set of pairs 
$(H, f)$ such that  $H$ is a hand whose label sets form a partition of $S$ and  $f$ is a function from the cards in $H$
to  $X$. 
Then the basic ingredients of the proof are that 
\begin{enumerate}[(1)] 
\item  $\mathrm{type}(H)$ is a multiplicative function only depending  on the weights of cards in $H$, and  
\item  The set $\mathcal{A}([n], X)$ 
 can be decomposed into a disjoint union of Cartesian products of the form 
 \[
 \mathcal{A}^P(R, X) \times \mathcal{A}([n]\setminus R, X\setminus [z_i]), 
 \]
 where $\mathcal{A}^P(R,X)=\{ (H, f) \in \mathcal{A}(R, X): \ 
 f_H \text{ is a $\vec{z}$-parking function} \}$, and the disjoint union is taken over all the subsets $R$ of $[n]$. 
 \end{enumerate} 
 We skip the details of the proof of Eq. \eqref{eq:mainlemma2}. 
 
We illustrate the above results and some connections to combinatorics  in the exponential families given in Examples \ref{example4-partition} and \ref{example5-cycles}. There are many other  exponential families in which the type enumerator and associated \gon polynomials have interesting combinatorial significance. 

\begin{enumerate} 
\item Let $\mathcal{F}_1$ be the exponential family of set partitions described in Example \ref{example4-partition}. In this family, $d_i=1$ for all $i$ and $h_n(x)= 
\sum\limits_{k=0}^n S(n,k) x^k$. In the type enumerator, if we substitute 
 $y_1=1$ and $y_i=w_i$ for $i \geq 2$, then $h_n(x;\mathbf{y})$ is exactly the same as the generic sequence $a_n(x;w)$ in \eqref{eq:zeta-polynomial}, and consequently $t_n(x; \mathbf{y}, \mathcal{F}_1, \zz)$ is the same as the generic \gon polynomial 
 $t_n(x;w,\zz)$ defined by \eqref{recurrdef2}. 
In particular, if all $y_i=1$, $t_n(0; \mathbf{y}, \mathcal{F}_1, -\zz)$ gives a formula for the number of $\vec{z}$-parking functions  
with the additional structure that cars arrive in disjoint groups, and drivers in the same group always prefer the same parking spot. 

When $y_i=1$ and $z_i=1+i$ for all $i$, the first few terms of the \gon polynomials $t_n(x)=t_n(x;\mathbf{y}, \mathcal{F}_1, -\zz)$ are
 \begin{eqnarray*} 
 t_0(x) &= & 1   \\
 t_1(x) & =& x+1   \\
 t_2(x) & =& x^2+5x+4  \\
 t_3(x) & = & x^3+12x^2+40x+29 \\
 t_4(x) &= & x^4+ 22x^3+163x^2+ 453x+ 311  
 \end{eqnarray*} 
In particular, for $x=0$  we get the sequence $1,1,4,29,311,...$. This is sequence  A030019 in the On-Line Encyclopedia of Integer Sequences (OEIS) \cite{oeis}, where it is interpreted as the number of labeled spanning trees in the complete hypergraph on $n$ vertices (all hyper-edges having cardinality 2 or greater). It would be interesting to find a direct bijection between the hyper-trees and the 
parking-function interpretation.

\item Let $\mathcal{F}_2$ be the exponential family of the permutations and their cycles, as described in Example \ref{example5-cycles}. Here $d_n=(n-1)!$ and 
 $h_n(x)=\sum\limits_{k=0}^{n} c(n,k)x^k=x^{(n)}$, where the $c(n,k)$ is the signless Stirling numbers of the first kind and $x^{(n)}$ is the rising factorial $x(x+1)\cdots (x+n-1)$.  When $y_1=1$, the \gon  polynomial 
  $t_n(x; \mathbf{y}, \mathcal{F}_2, \zz)$ can be obtained from 
  the generic form $t_n(x; w, \zz)$ by replacing $w_n$ with 
  $(n-1)!y_n$ for $n \geq 2$.  
  When all $y_i=1$, i.e. $y =\mathbf{1}$,
 $t_n(0; \mathbf{1}, \mathcal{F}_2, -\mathcal{Z})$ gives a formula for the number of $\vec{z}$-parking functions 
with the addition requirement that  cars are formed in disjoint cycles, and drivers in the same cycle prefer the 
same parking spot. 
  
  In addition, when $y =\mathbf{1}$, and 
 $\mathcal{Z}$ is the arithmetic progression $z_i=a+bi$, the Goncarov polynomial  is  
\begin{equation} \label{eq:family2} 
t_n(x; \mathbf{1}, \mathcal{F}_2; -\mathcal{Z})=(x+a)
(x+a+nb+1)^{(n-1)}. 
\end{equation}

Another combinatorial interpretation of  $t_n(0; \mathbf{1}, \mathcal{F}_2, -\zz)$ 
is  given in \cite[Section 6.7]{goncarovdelta}, where it shows  that  $t_n(0;\mathbf{1}, \mathcal{F}, -\zz)$  is $n!$ times the number of lattice paths from $(0,0)$ to $(x-1,n)$ with strict  right boundary $\zz$. 
 For example, when $z_i=a+bi$ for some  positive 
 integers $a$ and  $b$, $\frac{1}{n!} t_n(0; \mathbf{1}, \mathcal{F}_2, -\zz)$ is the number of lattice paths from $(0,0)$ to $(x-1, n)$ which stay strictly to the left of the points 
 $(a+ib, i)$ for $i=0, 1, \dots, n$. 
 In particular for $a=1$ and $b=k$,  it counts the number of labeled lattice paths from the origin to $(kn,n)$ that never pass below the line $x=yk$. In that case 
 \eqref{eq:family2}  gives
 $\frac{1}{1+kn}\binom{(k+1)n}{n}$,  the $n$-th $k$-Fuss-Catalan number.

\end{enumerate}

\begin{remark}
 We can also consider the injective functions  in the  definition 
 of $h_n(x)$ and $h_n(x;y)$ in \eqref{eq-h(x)} and \eqref{eq:h-type}, where the term $x^k$ is replaced by the lower factorial $x_{(k)}=x(x-1)\cdots (x-k+1)$.  In other words,  cards of a hand  are labeled by $X$ with the additional property that different cards get different labels.
Some examples are given in \cite[Section 6]{goncarovdelta} and 
called \emph{monomorphic classes}.  
 A result analogous to Theorem~\ref{thm:gp-exponential}  still holds for the momomorphic classes of an exponential family. 
 \end{remark}

As a final result we point out an explicit formula  to compute
the constant coefficient of the generalized \gon polynomial whenever we know the basic sequence $\{p_n(x)\}_{n \geq 0}$. 
It is proved in \cite{goncarovdelta} and only depends on the recurrence \eqref{eq:gp-recurrence} and the fact that $p_n(0)=0$ for $n > 0$.  The proof  
does not need an explicit formula for the delta operator 
$\Delta$ and hence the result is easier to use when we need to compute the value of $t_n(0; \mathbf{y}, \mathcal{F},-\zz)$ in a given  exponential family. 

Let $\{ p_n(x)\}_{n\geq 0}$ be a sequence of binomial type 
and $\zz=(z_0, z_1, \dots  )$  be a given  grid.  
Assume $\{t_n(0;-\zz)\}_{n \geq 0}$ is defined by the recurrence relation 
\[
t_n(0;-\zz) = -\sum_{i=0}^{n-1} \binom{n}{i} p_{n-i}(-z_i) t_i(0), 
\]
for $n \geq 1$ and $t_0(0;-\zz)=1$. 
Then for $n \geq 1$, $t_n(0;-\zz)$ can be expressed as a summation 
over ordered partitions.

Given a finite set $S$ with $n$ elements, an \textit{ordered partition} of $S$ is an ordered list $(B_1,...,B_k)$ of disjoint nonempty subsets of $S$ such that $B_1\cup \cdots \cup B_k=S$.
If $\rho =(B_1,...,B_k)$ is an ordered partition of $S$, then we set $|\rho|=k$.
For each $i=1,2,...,k$, we let $b_i=b_i(\rho)=|B_i|$, and $s_i:=s_i(\rho):=\sum_{j=1}^{i} b_j$. In particular,  set $s_0(p)=0$.
Let $\mathcal{R}_n$ be the set of all ordered partitions of the set $[n]$.
\begin{theorem}[\cite{goncarovdelta}] \label{thm:gp-form} 
For $n \geq 1$, 
\begin{align}\label{4.7}
    t_n(0;-\zz) =& \sum\limits_{\rho \in \mathcal{R}_n} (-1)^{|\rho|} \prod_{i=0}^{k-1} p_{b_{i+1}}(-z_{s_i}) \nonumber \\ 
    =& \sum\limits_{\rho \in \mathcal{R}_n} (-1)^{|\rho|} p_{b_1}(-z_{0}), \cdots \,p_{b_{k}}(-z_{s_{k-1}}).
\end{align}
\end{theorem}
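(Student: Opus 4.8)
The plan is to prove Theorem~\ref{thm:gp-form} by induction on $n$, unwinding the defining recurrence for $t_n(0;-\zz)$ and recognizing that each expansion step peels off one block from the front of an ordered partition. First I would set up the induction: for $n=1$, the recurrence gives $t_1(0;-\zz) = -p_1(-z_0)$, which matches the right-hand side of \eqref{4.7} since $\mathcal{R}_1$ has a single ordered partition $\rho=([1])$ with $|\rho|=1$ and $b_1=1$. For the inductive step, I would substitute the inductive hypothesis for each $t_i(0;-\zz)$ with $i < n$ into
\[
t_n(0;-\zz) = -\sum_{i=0}^{n-1} \binom{n}{i} p_{n-i}(-z_i)\, t_i(0;-\zz),
\]
so that $t_n(0;-\zz)$ becomes a sum over pairs $(i, \rho')$ where $\rho' \in \mathcal{R}_i$ (with $\mathcal{R}_0$ being the empty ordered partition contributing the empty product).

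The key combinatorial observation is a bijection: a pair consisting of an $i$-subset $T \subseteq [n]$ (the support of $\rho'$, with $|T| = i$), an ordered partition $\rho'$ of $T$, and the complementary block $[n]\setminus T$ of size $n-i$ placed at the \emph{front}, corresponds exactly to an ordered partition $\rho = ([n]\setminus T, B_1', \dots, B_{k'}')$ of $[n]$ — but one must be careful, because in $\rho$ the \emph{first} block $B_1$ is evaluated at $-z_{s_0} = -z_0$, the second at $-z_{s_1}$, and so on. Examining the recurrence, the factor $p_{n-i}(-z_i)$ carries the node $z_i$ where $i = |T| = s_1(\rho)$ after the first block is prepended. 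So actually the newly prepended block of size $n-i$ should be thought of as the block $B_1$ of $\rho$ with $b_1 = n-i$, and $z_i = z_{b_1} = z_{s_1}$... this indexing needs reconciling: in \eqref{4.7} block $B_{j+1}$ is evaluated at $-z_{s_j}$ where $s_j = b_1 + \dots + b_j$. The cleanest route is to index ordered partitions from the \emph{back}: prepending a block of size $n-i$ to an ordered partition of a set of size $i$ and noting that each of the original blocks keeps its evaluation node unchanged while the new front block gets node $z_0$... but the recurrence attaches $z_i$ to the size-$(n-i)$ factor, not $z_0$.

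Hence I expect the main obstacle to be getting the node-indexing bookkeeping exactly right, i.e., verifying that the factor $p_{n-i}(-z_i)$ in the recurrence corresponds to the \emph{last} block $B_k$ of $\rho \in \mathcal{R}_n$ (evaluated at $-z_{s_{k-1}}$ with $s_{k-1} = n - b_k = i$), rather than the first. So I would instead decompose each $\rho = (B_1, \dots, B_k) \in \mathcal{R}_n$ by \emph{removing its last block} $B_k$: then $|B_k| = n - i$ where $i = s_{k-1}$, the remaining list $(B_1,\dots,B_{k-1})$ is an ordered partition of an $i$-subset, and the chosen $i$-subset can be any $i$-subset of $[n]$ (since $B_k = [n] \setminus (B_1 \cup \dots \cup B_{k-1})$), giving the $\binom{n}{i}$ factor and the $p_{n-i}(-z_i) = p_{|B_k|}(-z_{s_{k-1}})$ factor with the correct node. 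Under this decomposition the sign $(-1)^{|\rho|} = -(-1)^{|\rho'|}$ matches the leading minus sign in the recurrence. Summing over $i$ from $0$ to $n-1$ and over all such $\rho'$ then reproduces exactly the sum over $\mathcal{R}_n$ in \eqref{4.7}, completing the induction. A final sentence would note that the second line of \eqref{4.7} is just the first line written out, using $b_{i+1}$ and $s_i$ notation, so no additional argument is needed there.
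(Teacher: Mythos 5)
Your proof is correct: the paper only cites this result from \cite{goncarovdelta} and remarks that it depends solely on the recurrence together with $p_n(0)=0$ for $n>0$, and your induction --- peeling off the \emph{last} block $B_k$ of each ordered partition, which carries the factor $p_{b_k}(-z_{s_{k-1}})=p_{n-i}(-z_i)$ with $i=s_{k-1}$, supplies the $\binom{n}{i}$ via the choice of the complementary $i$-subset, and flips the sign --- is exactly the standard unwinding of that recurrence into a sum over $\mathcal{R}_n$. The only blemish is the lengthy false start about prepending the first block; the final last-block decomposition (together with the implicit, and valid, identification of ordered partitions of an arbitrary $i$-subset with $\mathcal{R}_i$, since the weight depends only on block sizes) is sound.
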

The following list is the formulas for the first several \gon polynomials. 
\begin{eqnarray*} 
t_0(0;-\zz) &= & 1 \\
t_1(0;-\zz) & = & -p_1(-z_0) \\ 
t_2(0; -\zz) & = & 2p_1(-z_0) p_1(-z_1) -p_2(-z_0) \\ 
t_3(0; -\zz) & =& -p_3(-z_0) +3p_2(-z_0) p_1(-z_2) +3 p_1(-z_0)p_2(-z_1) -6 p_1(-z_0)p_1(-z_1)p_1(-z_2).
\end{eqnarray*}

\subsection{Degenerate Cases}

In an exponential family, the polynomial $h_n(x)$ or $h_n(x;\mathbf{y})$ may not always have degree $n$, e.g., when $d_1=h_{1,1}=0$. We say that such polynomial sequences and the corresponding  exponential families are \emph{degenerate}. 
For a degenerate sequence of polynomials, there is no delta operator for which the sequence is the basic or the conjugate sequence. 
Nevertheless, the 
exponential formulas \eqref{eq:h-gf} and \eqref{eq:h-type-gf}
are still true. Hence the sequences 
$\{ h_n(x)\}_{n \geq 0}$ and $\{ h_n(x;\mathbf{y})\}_{n\geq 0}$ still satisfy the binomial-type identity \eqref{eq:binomial}. 

Without a delta operator, we cannot define 
the generalized \gon interpolation problems. However, we can still introduce the generalized \gon polynomials via the recurrence \eqref{eq:gp-recurrence}. 
Furthermore, we will prove in Theorem \ref{thm:shift} that the shift invariance of \gon polynomials can also be derived from \eqref{eq:gp-recurrence}. Therefore, 
Theorems \ref{thm:gp-exponential} and \ref{thm:gp-form} still hold true for the degenerate exponential families since all the proofs follow from the binomial-type  identity 
\eqref{eq:binomial} 
and the recurrence \eqref{eq:gp-recurrence}. 

\begin{theorem} \label{thm:shift} 
Assume $\{p_n(x)\}_{n\geq 0}$ is a polynomial sequence of binomial type
with $p_0(x)=1$, but the degree of $a_n(x)$ is not necessary $n$. Let $t_n(x;\zz)$ be defined by the recurrence relation 
\begin{eqnarray}
 t_n(x;\zz) =p_n(x)-\sum_{i=0}^{n-1} \binom{n}{i} p_{n-i}(z_i) t_i(x;\zz). 
\end{eqnarray}
For any scalar $\eta$ and the interpolation grid $\zz=\{z_0, z_1, z_2, \dots)$, let $\zz+\eta$ be the sequence $(z_0+\eta, z_1+\eta,z_2+\eta,  \cdots)$.  Then we have 
\begin{equation}\label{eq:shift-invt} 
 t_n(x+\eta; \zz+\eta) = t_n(x;\zz)
\end{equation}
for all $n \geq 0$. 
\end{theorem}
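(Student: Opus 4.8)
The plan is to prove the shift identity \eqref{eq:shift-invt} by induction on $n$, using only the defining recurrence and the binomial-type identity \eqref{eq:binomial}. The base case $n=0$ is immediate since $t_0(x;\zz)=p_0(x)=1$ regardless of the grid. For the inductive step, I would write out the recurrence for $t_n(x+\eta;\zz+\eta)$, namely
\[
t_n(x+\eta;\zz+\eta) = p_n(x+\eta) - \sum_{i=0}^{n-1}\binom{n}{i} p_{n-i}(z_i+\eta)\, t_i(x+\eta;\zz+\eta),
\]
and apply the induction hypothesis to replace $t_i(x+\eta;\zz+\eta)$ by $t_i(x;\zz)$ for each $i<n$. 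The goal then becomes showing that
\[
p_n(x+\eta) - \sum_{i=0}^{n-1}\binom{n}{i} p_{n-i}(z_i+\eta)\, t_i(x;\zz) = p_n(x) - \sum_{i=0}^{n-1}\binom{n}{i} p_{n-i}(z_i)\, t_i(x;\zz).
\]

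The key algebraic idea is to expand every shifted argument using \eqref{eq:binomial}: write $p_n(x+\eta)=\sum_{j}\binom{n}{j}p_j(x)p_{n-j}(\eta)$ and $p_{n-i}(z_i+\eta)=\sum_{\ell}\binom{n-i}{\ell}p_\ell(z_i)p_{n-i-\ell}(\eta)$. Substituting these into the left-hand side and carefully reindexing the double (triple) sums, I would collect terms according to the power $p_{n-m}(\eta)$ that appears. The coefficient of $p_n(\eta)$ should simply be $1$ (coming from the $j=0$ term of $p_n(x+\eta)$, since $p_0(x)=1$). For each $m$ with $0\le m<n$, the coefficient of $p_{n-m}(\eta)$ should telescope via the recurrence defining $t_m(x;\zz)$: the contribution from the $p_n(x+\eta)$ expansion gives $\binom{n}{m}p_m(x)$, while the contributions from the sum over $i$ assemble, after using the Vandermonde-type identity $\binom{n}{i}\binom{n-i}{\ell}=\binom{n}{\ell+i}\binom{\ell+i}{i}$ (or equivalently grouping by $i+\ell=m$), into exactly $-\binom{n}{m}\sum_{i=0}^{m-1}\binom{m}{i}p_{m-i}(z_i)t_i(x;\zz)$. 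Adding these and invoking the recurrence for $t_m$, the coefficient of $p_{n-m}(\eta)$ collapses to $\binom{n}{m}t_m(x;\zz)$ — which is independent of $\eta$ in the sense that the same computation with $\eta$ replaced by $0$ yields the same thing, namely the right-hand side.

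The main obstacle I anticipate is purely bookkeeping: managing the triple summation over $i$, $\ell$, and the induction index, and verifying that the reindexing $m=i+\ell$ together with the binomial coefficient identity produces precisely the recurrence for $t_m$. A cleaner route that avoids most of this is to work with exponential generating functions: let $F(t)=\sum_n p_n(x)t^n/n!$ and recall from \eqref{eq:2} that $F(t)=\exp(xg(t))$ for some $g$ with $g(0)=0$; then the recurrence \eqref{eq:gp-recurrence} translates into a functional relation among the generating series of $p_n(z_i)$ and $t_i(x)$, and the substitution $z_i\mapsto z_i+\eta$, $x\mapsto x+\eta$ multiplies the relevant series uniformly by $\exp(\eta g(t))$, which cancels. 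However, since the theorem is explicitly stated to cover the degenerate case where $p_n$ need not have degree $n$ and no delta operator exists, the generating-function argument must be phrased at the level of the formal identity \eqref{eq:binomial} alone (equivalently, $F(t)=\exp(\ell(t))$ for the formal series $\ell(t)=\sum_{k\ge 1}p_{k,1}(x)t^k/k!$ that \eqref{eq:binomial} forces to be additive in $x$); I would therefore present the induction as the primary proof and remark on the generating-function shortcut.
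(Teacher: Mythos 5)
Your strategy is exactly the one the paper uses: induct on $n$, apply the inductive hypothesis to replace $t_i(x+\eta;\zz+\eta)$ by $t_i(x;\zz)$, expand $p_n(x+\eta)$ and $p_{n-i}(z_i+\eta)$ via the binomial identity, regroup using $\binom{n}{i}\binom{n-i}{\ell}=\binom{n}{i+\ell}\binom{i+\ell}{i}$, and collapse the inner sums with the defining recurrence. The problem is in the bookkeeping of the coefficient of $p_{n-m}(\eta)$, and as written the argument does not close. When you group the double sum over $(i,\ell)$ by $m=i+\ell$ with $m<n$, the inner index $i$ runs from $0$ to $m$, not to $m-1$: the pair $(i,\ell)=(m,0)$ does occur (since $i=m\le n-1$ is a legal value of the outer summation index) and, because $p_0\equiv 1$, it contributes $-\binom{n}{m}p_0(z_m)\,t_m(x;\zz)=-\binom{n}{m}t_m(x;\zz)$. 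Including this term, the coefficient of $p_{n-m}(\eta)$ is
\[
\binom{n}{m}\Bigl[\,p_m(x)-\sum_{i=0}^{m-1}\binom{m}{i}p_{m-i}(z_i)\,t_i(x;\zz)-t_m(x;\zz)\Bigr]
=\binom{n}{m}\bigl[\,t_m(x;\zz)-t_m(x;\zz)\bigr]=0
\]
for every $0\le m<n$; in particular the coefficient of $p_n(\eta)$ is $0$, not $1$ as you claim, because the $(i,\ell)=(0,0)$ term cancels the $j=0$ term of $p_n(x+\eta)$. For $m=n$ the coefficient is $p_n(x)-\sum_{i=0}^{n-1}\binom{n}{i}p_{n-i}(z_i)t_i(x;\zz)=t_n(x;\zz)$, multiplied by $p_0(\eta)=1$, so the whole expression collapses to $t_n(x;\zz)$ and the induction closes with no further argument needed. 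By contrast, your version leaves the $\eta$-dependent quantity $\sum_{m}\binom{n}{m}t_m(x;\zz)\,p_{n-m}(\eta)$ (which in fact equals $t_n(x+\eta;\zz)$ by the binomial expansion of Gon\v{c}arov polynomials, not $t_n(x;\zz)$), and the closing remark that this is ``independent of $\eta$ because the same computation with $\eta=0$ gives the right-hand side'' is not a valid substitute for the missing cancellation. Fix the index range and your proof becomes precisely the paper's; your caveat about avoiding the generating-function shortcut in the degenerate case is well taken.
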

\begin{proof}
We prove Theorem \ref{thm:shift} by induction on $n$. The initial case $n=0$ is trivial since $t_0(x;\zz)=1$ for all $x$ and any grid $\zz$. Assume Eq.~\eqref{eq:shift-invt} is true for all indices less than $n$. We compute 
$t_n(x+\eta; \zz+\eta)$. By definition 
\begin{eqnarray} \label{eq:middle}
t_n(x+\eta;\zz+\eta) = p_n(x+\eta) -\sum_{i=0}^{n-1} 
\binom{n}{i} p_{n-i}(z_i+\eta) t_i(x+\eta;\zz+\eta). 
\end{eqnarray}
By the inductive hypothesis $t_i(x+\eta;\zz+\eta)=t_i(x;\zz)$ for $i<n$ and the binomial identity of $p_n(x)$, the right-hand side of 
\eqref{eq:middle} can be written as 
\begin{eqnarray}
& & \sum_{k=0}^n \binom{n}{k} p_k(x)p_{n-k}(\eta) 
- \sum_{i=0}^{n-1} \binom{n}{i} \left( \sum_{j=0}^{n-i} \binom{n-i}{j} p_{n-i-j}(z_i) p_j(\eta)  \right) t_i(x;\zz) \nonumber  \\
& = &  \sum_{k=0}^n \binom{n}{k} p_k(x)p_{n-k}(\eta) -\sum_{i+j\leq n \atop \text{except  } (i,j)=(n,0)} \binom{n}{i} \binom{n-i}{j} p_j(\eta) p_{n-i-j}(z_i) t_i(x;\zz)  \label{step 1} 
\end{eqnarray}
Since
\[
\binom{n}{i} \binom{n-i}{j} = \frac{n!} {i! j! (n-i-j)!} = \binom{n}{j} \binom{n-j}{i},  
\]
then~\eqref{step 1} can be expressed as 
\begin{eqnarray} 
& & \sum_{k=0}^n \binom{n}{k} p_k(x)p_{n-k}(\eta) -\sum_{i+j\leq n \atop \text{except  } (i,j)=(n,0)} \binom{n}{j} \binom{n-j}{i} p_j(\eta) p_{n-i-j}(z_i) t_i(x;\zz) \nonumber \\ 
&= & \sum_{k=0}^n \binom{n}{k} p_k(x)p_{n-k}(\eta) 
 -\sum_{j=1}^{n} \binom{n}{j} p_j(\eta) \sum_{i=0}^{n-j} \binom{n-j}{i} p_{n-i-j}(z_i) t_i(x;\zz) \nonumber \\
& &  - \sum_{i=0}^{n-1} \binom{n}{i} p_{n-i}(z_i) t_i(x;\zz). 
\label{eq:shift2} 
\end{eqnarray}
The last summation in \eqref{eq:shift2} corresponds to the terms with $j=0$.  
Note that 
\[
 \sum_{i=0}^{n-j} \binom{n-j}{i} p_{n-i-j}(z_i) t_i(x;\zz)
 =p_{n-j}(x). 
 \]
 Hence For.~\eqref{eq:shift2} is equal to 
 \begin{eqnarray*}
& &  \sum_{k=0}^n \binom{n}{k} p_k(x)p_{n-k}(\eta) -\sum_{j=1}^n \binom{n}{j} p_j(\eta) p_{n-j}(x) 
  - \sum_{i=0}^{n-1} \binom{n}{i} p_{n-i}(z_i)t_i(x;\zz)  \\
 &  =&  p_n(x) - \sum_{i=0}^{n-1} \binom{n}{i} p_{n-i}(z_i)t_i(x;\zz) \\
  & = & t_n(x;\zz). 
 \end{eqnarray*}
This finishes the proof. 
\end{proof}

The next example shows a degenerate exponential family. 

\begin{example} 
\textit{2-Regular simple graphs}. 
In this exponential family a card is an undirected cycle on a label set $[m]$ (where $m \geq 3$). The deck $\mathcal{D}_n$ consists of all undirected circular arrangements of $n$ letters so $d_n=\frac{1}{2}(n-1)!$ for $n  \geq 3$ and $d_1=d_2=0$.  A  hand is then a undirected simple graph on the vertex set $[n]$, 
which is 2-regular, that is, every vertex has degree 2.  
Thus, $h_{n,k}$ is the  number of  undirected 2-regular simple graphs on $n$ vertices consisting of $k$ cycles. 
Denote by $\mathcal{F}_3$ this exponential family. 

For $\mathcal{F}_3$, the type enumerators are
$h_0(x,\mathbf{y})=1$, 
 $h_1(x;\mathbf{y})=
 h_2(x;\mathbf{y})=0$,  
 $h_3(x;\mathbf{y})= y_3x$, 
 $h_4(x;\mathbf{y})=2y_4 x$, 
 $h_5(x,\mathbf{y})=12 y_5 x$, and $ 
 h_6(x;\mathbf{y})= 60y_6 x+ 10 y_3^2 x^2$, etc. 
 Although the degree of $h_n(x;\mathbf{y})$ is not $n$, 
the exponential formula still holds: 
\[
\sum_{k=0}^n h_n(x;\mathbf{y}) \frac{t^k}{k!} = \exp
\left(  x \sum_{k \geq 3 } y_k \frac{t^k}{2k} \right). 
\] 
We compute by the recurrence \eqref{eq:4}  that 
\begin{eqnarray*} 
t_0(x; \mathbf{y}, \mathcal{F}_3, \zz) &= &1 \\ 
t_1(x; \mathbf{y}, \mathcal{F}_3, \zz) & =&  t_2(x;y, \mathcal{F}_3, \zz) =
 0 \\
 t_3(x; \mathbf{y}, \mathcal{F}_3, \zz) &=& y_3(x-z_0), \\
 t_4(x;\mathbf{y}, \mathcal{F}_3, \zz) &= & 3y_3 (x-z_0), \\
 t_5(x; \mathbf{y}, \mathcal{F}_3, \zz) & =& 12y_5 (x-z_0), \\
 t_6(x; \mathbf{y}, \mathcal{F}_3, \zz) &= &  10y_3^2 x^2 +60y_6x-20y_3^2 z_3x -60y_6z_0-10y_3^2z_0^2 +20y_3^2 z_0z_3. 
\end{eqnarray*} 
The equation 
\[
t_n(0; \mathbf{y}, \mathcal{F}_3, -\zz) = \sum_{H: \text{ of weight $n$}} \mathrm{type}(H)\cdot PF_H(\zz) 
\]
is still true. For example, for $n=6$, 
$t_6(0; y, \mathcal{F}_3, -\zz)= 60y_6 z_0+ 20y_3^2 z_0z_3-10y_3^2z_0^2$.  The term $60 y_6 z_0$ comes from 
the $5!/2=60$ $6$-cycles, and the terms $10y_3^2 (2z_0z_3-z_0^2)$
comes from the 10 hands each with two $3$-cycles. 
 \qed

\end{example}



\section{Closing Remarks}

In this paper we present the combinatorial interpretation of an arbitrary sequence of \gon polynomials associated with a polynomial sequence of binomial type. There are many other combinatorial problems that provide a formal framework of coalgebras,  bialgebras, or Hopf algebras \cite{jonirota}. 
In those problems the counting sequences satisfy an identity that is analogous to the binomial-type identity \eqref{eq:binomial}, 
with the binomial coefficients $\binom{n}{i}$ replaced by some other section coefficients.  
For example, the theory of binomial enumeration proposed by  Mullin and Rota \cite{mullinrota} was generalized to an abstract context and applied to dissecting schemes by Henle \cite{henle}.  It would be an interesting  project to investigate the role of generalized \gon polynomials in these other dissecting schemes and discrete structures.  
As suggested by Henle, this research may lead to 
connections to rook polynomials, order invariants of posets, Tutte invariants of combinatorial geometries, 
cycle indices and symmetric functions, and many others. 




\end{document}